\tikzset{               
    redarrows/.style={postaction={decorate},decoration={markings,mark=at position 0.1 with {\arrow[draw=black]{>}}},
           decoration={markings,mark=at position -0.4 with {\arrow[draw=black]{>}}},}}
\pgfplotsset{compat=1.18} 
\numberwithin{equation}{section}
\newtheorem{theorem}{Theorem}[section]
\newtheorem{lemma}[theorem]{Lemma}
\newtheorem{prop}[theorem]{Proposition}
\newtheorem{definition}[theorem]{Definition}
\def \bpf {\begin{proof}}
\def \epf {\end{proof}}
\def \beq {\begin{equation}}
\def \eeq {\end{equation}}
\def \bsp{\begin{split}}
\def \esp{\end{split}}
\def \defn {:=}
\def \ovp {{\overline{p}}}
\def \wt {\widetilde}
\def \mca {{\mathcal A}}
\def \mcb {{\mathcal B}}
\def \mce {{\mathcal E}}
\def \mcf {{\mathcal F}}
\def \mcg {{\mathcal G}}
\def \mch {{\mathcal H}}
\def \mr {{\mathbb R}}
\def \mn {{\mathbb N}}
\def \ms {{\mathbb S}}
\def \mbz {{\mathbb Z}}
\def\ha {\frac{1}{2}}
\def \tha  {\frac32}
\def \oq {\frac{1}{4}}
\def \div {\operatorname{div}}
\def \mrn {{\mathbb R}^n}
\def \mcn {\mathcal{N}}
\def \msn {{\mathbb S}^n}
\def \ga {{\gamma}}
\def \eps {\varepsilon}
\def \vphi {\varphi}
\def \la {\lambda}   
\def \lan {\langle}   
\def \ran {\rangle}   
\def \del {\delta}   
\def \p {\partial}
\def \novt {\frac{n}{2}}
\def \beqq {\begin{equation}}
\def \eeqq {\end{equation}}
\def \ooh {\frac{1}{h}}
\def \ioh {\frac{i}{h}}
\numberwithin{equation}{section}
\begin{document}

\title[ Recovery of Null Forms] {The Recovery of Semilinear Potentials Satisfying Null Conditions From Scattering Data}
\author[Nathe]{Joel Nathe}

\author[S\'a Barreto]{Ant\^onio S\'a Barreto}
\address{Department of Mathematics, Purdue University \newline
\indent 150 North University Street, West Lafayette IN  47907, USA}
\email{jnathe@purdue.edu}
\email{sabarre@purdue.edu}
\keywords{Nonlinear inverse problems for wave equations.  Nonlinear geometric optics. AMS mathematics subject classification: 35L05, 35L71, 35P25, 7810}
\thanks{ The second author is partly supported by the Simons Foundation grant \#848410. }

\begin{abstract} We construct oscillatory solutions of fully semilinear wave equations in Minkowski space  satisfying a null condition 
of the form
\[
\begin{split}
 \square u\defn &  \bigl(-\p_{x_0}^2 +\sum_{j=1}^n \p_{x_j}^2 \bigr)u=  q(x,u)\bigl((\p_{x_0}u)^2-|\nabla_{x'}u|^2\bigr), \\
& x=(x_0,x^{\prime}), \;\ x'=(x_1,\ldots, x_n)  \text{ and } x_0=t \text{ is the time variable, }
\end{split}
\]
on an interval $x_0\in [-T,T]$,  $T<\infty$ arbitrary, which consist of the superposition of a non-oscillatory background solution and  a single phase  train of highly oscillatory waves of wave length $h\ll1$ and amplitudes given by powers of  $h$; the waves interact with the nonlinearity and we measure the response $u(x_0,x')\bigr|_{x_0=T'}$ at a fixed time $x_0=T'<T$.

We show that  the coefficient of amplitude $h$ of the oscillatory part of the nonlinear geometric optics expansion of the solution  determines the light-ray transform of a vector field associated with $q(x,u)$, which determines $q(x,u)$ uniquely in the maximal region determined by the data.   Our methods also work for systems of semilinear wave equations satisfying null conditions, but in this paper we focus on the scalar case.
\end{abstract}
\maketitle

\tableofcontents

\section{Introduction}

 Numerous scientific experiments seek to  obtain information about a medium by probing it with waves and measuring its response.  These are  done for example in the study of laser interaction with crystals, oil prospection, ultrasound and other forms of medical imaging, \cite{EptSte1,EptSte2,KuvSmiCam,QiuFar,Tab,WanLiu} and it is often is the case that the medium is nonlinear, in which case the principle of superposition does not hold and waves interact with themselves and with each other.    On one hand, the nonlinearity adds serious difficulties  to the understanding of the propagation of waves, but on the other hand, the nonlinear interactions of the waves also reveal information about the medium, and  the question is how to judiciously extract it.

 We  consider an  inverse problem for semilinear wave equations satisfying a null condition, see \eqref{NLWE} below.  Examples of nonlinear wave equations satisfying null conditions are given by the wave maps equation  \cite{KlaMac,Tat,Tat1}. They also appear in connection with the study of  Yang Mills equations \cite{Chr} and Einstein equations \cite{LinRod}.

  We construct highly oscillatory solutions of scalar semilinear wave equations  \eqref{NLWE} below, for $x_0\in [-T,T]$, with $T$ arbitrary.  These are known as nonlinear geometric optics solutions.    We show that  the coefficient of amplitude $h$ of the oscillatory part of the nonlinear geometric optics expansion of the solution  restricted to $x_0=T'<T$ determines the light-ray transform of a vector field associated with $q(x,u)$, which determines $q(x,u)$ uniquely in the maximal region determined by the data. The construction also works for systems such as the ones studied in \cite{Sog}, but these will  be treated elsewhere.  Our focus is to present the methods in the scalar case.  The existence and structure of oscillatory solutions of  equations satisfying null conditions have been studied by several people, for example \cite{Cho,Cho1,HunLuk,Tou}, but as far as we know, the inverse problem for \eqref{NLWE} has not been treated.

  Nonlinear geometric optics  is a traditional subject  in physics, see for example the books \cite{Boy,Blo,BorWol,NewMol}. The rigorous mathematical theory has only been developed relatively recently, starting in the late 1980s, by several people, including Gu\`es, Joly, Majda, M\'etivier, Rauch \cite{Gue,JolMetRau,Maj1,Maj2}, see for example the books \cite{Met1,Rau} for a more thorough account.   The use of nonlinear geometric optics  methods  to recover semilinear potentials of the form $f(x,u)$ was first used  by Stefanov and the second author \cite{SaBSte,SaBSte1}.  More recently, Eptaminitakis and Stefanov have used such methods to study inverse problems for quasilinear equations \cite{EptSte1,EptSte2}.

  The rigorous study of nonlinear inverse problems for the wave equation has  gained a lot of attention in recent years beginning with with the work of Kurylev, Lassas and Uhlmann \cite{KurLasUhl} which showed that that the source to solution map for semilinear equations determines a Lorentzian manifold, modulo invariants.  While we aim to recover the nonlinearity,  the goal of \cite{KurLasUhl} is to recover information about the manifold. 
  
  This work \cite{KurLasUhl}  was  followed by many others, for example \cite{FeiLas,HinUhl,KurLasOksUhl,LasUhlWan,SaBUhlWan,TinWan,UhlZha,UhlZha1}.    The methods used in these papers are based on higher order linearization and propagation of singularities, and one has to filter parts of the solution, both in Sobolev regularity of the waves produced by the interactions of conormal waves, and in the powers of small parameters of the linearization.

  We will use the notation:
 \[
 \begin{split}
 &  x=(x_0,x^{\prime}), \;\ x^{\prime}=(x_1, x_2,\ldots, x_n),   \\
 & \nabla u=(\p_{x_0}u, \p_{x_1}u, \ldots, \p_{x_n}u) \text{ and } \nabla_{x^{\prime}}u=(\p_{x_1}u, \ldots, \p_{x_n}u),
 \end{split}
\]

  We consider  fully semilinear wave equations of the form
\beq\label{NLWE}
\begin{aligned}
&  \square u\defn  \, \bigl( -\p_{x_0}^2 +\sum_{j=1}^n \p_{x_j}^2\bigr)u = Q(x,u,\nabla u)=q(x,u)((\p_{x_0}u)^2-|\nabla_{x'} u|^2).
 \end{aligned}
\eeq
 Such nonlinear potentials $Q(x,u,\nabla u)$ are called null forms after \cite{Chr,Kla,KlaMac,Sog}.  In fact we can show that one can recover $f(x) q(x,u)$ for any $f\in C_0^\infty(\mr^{n+1})$ and hence may assume that $q(x,u)$  is compactly supported in the variable $x$.

 We will construct suitable oscillatory solutions $u$ of \eqref{NLWE}, and we show that the restriction of $u$ to $\{x_0=T\}$, for $T$ large enough, determines $q(x,u)$  provided it is compactly supported in $x$.  In general, the existence and uniqueness of solutions of semilinear equations of the form \eqref{NLWE} is only guaranteed for initial data with small energy \cite{Chr,Kla,KlaMac,Sog}, which is not sufficient to study the inverse problem; we need to construct bounded oscillatory solutions whose initial data does not have small energy.  The key point of the proof is to  construct a one-parameter family of approximate oscillatory solutions of \eqref{NLWE} in the sense of  \eqref{defuh} below, valid on arbitrary finite interval $[T_0,T]_{x_0}\times \mrn_{x^{\prime}}$.    We then use a result  of Gu\`es \cite{Gue} to show that there exists a unique one-parameter family of solutions of \eqref{NLWE} with the same initial data, valid in the same region $[T_0,T]_{x_0}\times \mrn_{x^{\prime}}$, which differs from the approximate solution by a large enough power of the wave length $h$, see Theorem  \ref{approx} below.  For the convenience of the reader, in Section \ref{PRGU},  we give a proof of Gu\`es' result  for this particular case.  We emphasize that the results of \cite{Gue} are more general and hold for quasilinear systems. It is also important to emphasize that for any  finite time interval there is an oscillatory solution, but the estimates depend on the size of the interval, so these solutions are not global.

The oscillatory solutions we construct are  such that the phases are still given by the linear eikonal equation for the wave operator, and the coefficients are determined by transport equations which involve the nonlinearity. This has been called the weakly nonlinear optics regime, see for example \cite{Met1,Rau}.

   We will adopt the following notation:
   \beq\label{notation}
   \begin{split}
   &  \text{ if } V_\pm=(\pm 1,\theta), \;\ \theta\in \ms^{n-1},  \text{ we denote } \wt V_\pm =(\mp 1,\theta), \\
 &\text{ and } \lan x,V_\pm\ran_{{}_M}= \mp x_0+  \lan x',\theta\ran= \mp x_0+ \sum_{j=1}^n x_j \theta_j,
 \end{split}
 \eeq
$ \lan x,V_\pm\ran_{{}_M}$ is the Minkowski scalar product.

  If $\vphi\in C^\infty(\mr)$ and $\theta\in \ms^{n-1},$ we denote
  \beq\label{defphi}
  \vphi_{{}_V}(x)\defn \vphi(\lan x,V\ran_{{}_M}), \;\  \vphi^\prime_{{}_V}(x)\defn \vphi^\prime(\lan x,V\ran_{{}_M}) \text{ and }  \vphi^{\prime\prime}_{{}_V}(x)\defn \vphi^{\prime\prime}(\lan x,V\ran_{{}_M}),
  \eeq
   and notice that
  \[
  \begin{split}
   &  \nabla \vphi_{{}_V}(x)  = \vphi_{{}_V}^{\prime}(x) \wt V, \;\  
   \p_{x_0}^2 \vphi_{{}_V}(x) = \vphi_{{}_V}^{\prime\prime}(x)
\text{ and }   \p_{x_j}^2 \vphi_{{}_V}(x)= \vphi_{{}_V}^{\prime\prime}(x) \theta_j^2, \;\ j=1, \ldots, n, 
  \end{split}
  \]
  and therefore
  \[
  \square \vphi_{{}_V}=(\p_{x_0}\vphi_{{}_V})^2-|\nabla_{x'} \vphi_{{}_V}|^2=0.
  \]
 We conclude that for $Q(x,u,\nabla u)$ as in \eqref{NLWE},
 \beq\label{equ0}
 \square \vphi_V(x)= Q\bigl(x,\vphi_V(x),\nabla \vphi_V(x)\bigr) = 0.
  \eeq

 We will think of $\vphi_V(x)$ as background solutions of \eqref{NLWE} and will construct solutions of \eqref{NLWE} which are oscillatory perturbations of $\vphi_V(x)$.  This is the main result of the paper:
 \begin{theorem}\label{main}  Let $q(x,u)$ be supported on $\{|x|\leq R\} \times \mr$. 
 Let  $\vphi , \chi\in C_0^\infty(\mr)$  be real valued, let $\omega, \theta\in \ms^{n-1}$,  $V=(\pm 1,\theta)$ and  $W=( -1,\omega)$. Let $\vphi_{{}_V}(x)$ and $\chi_{{}_W}(x)$  be defined as in \eqref{defphi}.  For $A,B\in \mr$,
 \[
\begin{split}
 u_{inc}(h,V,W,x)= \vphi_{{}_V}(x) +  h \chi_{{}_W}(x) \biggl( A \cos\biggl(\ioh \bigl( \lan x,W\ran_{{}_M}\bigr)\biggr)+ B \sin\biggl(\ioh \bigl( \lan x,W\ran_{{}_M}\bigr)\biggr)\biggr),
\end{split}
 \]
which satisfies \eqref{NLWE} if $x$ is not on the support of $\chi_{{}_W}$.  For any $N\in \mn$ and any interval $[T_0,T]_{x_0}$,  there exists a unique solution $u(h,V,W,x)$ of \eqref{NLWE}  such that 
\[
u(h,V,W,x)= u_{inc}(h,V,W,x) \text{ for } x_0\ll 0 \text{ outside the support of } q(x,u),
\]
which has an expansion of the form
\beq\label{defuh}
\begin{split}
& u_{N}(h,V,W,x)=  \vphi_{{}_V}(x) +  h \sum_{p=0}^N h^p \sum_{\pm m=0}^N  e^{\frac{im}{h} \lan x,W\ran_{{}_M}}  A_{m,p}(V,W,x)+ h^{N+1} \mce_N(h,V,W,x),\\
& \text{ where } A_{m,p} \in C^\infty, \;\ \overline{A_{m,p}}= A_{-m,p},  \text{ so  the expansion is real valued,} 
\end{split}
\eeq
and satisfy the following initial conditions when $x_0\ll 0$:
\beq\label{defcoeff}
\begin{split}
& A_{0,p}(V,W,x)=0, \\
&  A_{1,0}(V,W,x)= \ha \chi_{{}_W}(x)(A-i B), \;\    A_{m,0}(V,W,x)=0, \;\ \text{ if } m \ge 2,\\
 & A_{m,p}(V,W,x)=0, \text{ if } p\ge1.
\end{split}
 \eeq
The coefficient $A_{1,0}$ satisfies the transport equation
\beq\label{eqa1p0}
\begin{split}
& \bigl( \p_{x_0}- \omega\cdot \nabla_{x^{\prime}} \bigr) A_{1,0}= F(V,W,x) A_{1,0}, \text{ where } \\
& F(V,W,x)=\lan  q\bigl(x, \vphi_{{}_V}(x)\bigr)  \vphi_{{}_V}^\prime(x)   \wt V,\wt W \ran_M, \;\ \wt W=(1,\omega).
\end{split}
\eeq
Moreover, for any $\tau>0$, the error term satisfies
\[
\begin{split}
& \sup_{x_0\in [-\tau,\tau]}\| \mce_N\|_{{}_{L^2(\mr^n)}}\leq C(N,\tau), \;\  
\sup_{x_0\in [-\tau,\tau]}\| \mce_N\|_{{}_{L^\infty (\mr^n)}}\leq C(N,\tau), \\
& \sup_{x_0\in [-\tau,\tau]}\| D_x^\alpha \mce_N\|_{{}_{L^2(\mr^n)}}\leq C_\alpha(N,\tau) h^{1-|\alpha|}, \;\
\sup_{x_0\in [-\tau,\tau]}\| D_x^\alpha \mce_N\|_{{}_{L^2(\mr^n)}}\leq C_\alpha(N,\tau) h^{1-|\alpha|}, \;\ |\alpha|\geq 1.
\end{split}
\]
 \end{theorem}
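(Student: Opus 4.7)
The plan is to construct $u_N$ as a formal WKB expansion in $h$ in the form \eqref{defuh}, substitute it into \eqref{NLWE}, match powers of $h$ and harmonics $e^{im\lan x,W\ran_M/h}$ to obtain a triangular system of transport equations for the amplitudes $A_{m,p}$, solve these recursively with the initial data \eqref{defcoeff}, and finally upgrade $u_N$ to an exact solution via Theorem \ref{approx} (Gu\`es' theorem).

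Writing $\psi(x)\defn\lan x,W\ran_M$, so that $\nabla\psi=\wt W$ and $\lan\wt W,\wt W\ran_M=-1+|\omega|^2=0$, a direct computation gives
\[
\square\bigl(e^{im\psi/h}A\bigr)=e^{im\psi/h}\left[-\frac{2im}{h}\bigl(\p_{x_0}-\omega\cdot\nabla_{x'}\bigr)A+\square A\right],
\]
so the $h^{-2}$ eikonal term is absent and the $h^{-1}$ term furnishes the transport operator. On the nonlinear side I Taylor expand $q(x,u)$ around $u=\vphi_V$ and use the identity $(\p_{x_0}u)^2-|\nabla_{x'}u|^2=-\lan\nabla u,\nabla u\ran_M$. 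The key structural observation, which makes the whole expansion close, is that both $\wt V$ and $\wt W$ are null for $\lan\cdot,\cdot\ran_M$: this kills the would-be $O(1)$ self-interaction $\lan\wt W,\wt W\ran_M\sum_{m,m'}e^{i(m+m')\psi/h}A_{m,0}A_{m',0}$, which otherwise would have generated resonant $m+m'=0$ non-oscillatory sources, and leaves only the benign cross contribution $2\vphi_V^\prime\lan\wt V,\wt W\ran_M\sum_m im\,e^{im\psi/h}A_{m,0}$. Matching the coefficient of $h^0 e^{im\psi/h}$ then yields exactly the transport equation \eqref{eqa1p0}; combined with the initial data \eqref{defcoeff} this produces $A_{\pm1,0}$ while forcing $A_{m,0}\equiv0$ for $|m|\geq2$.

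At higher orders $h^p e^{im\psi/h}$ the matching gives inhomogeneous transport equations of the form
\[
\bigl(\p_{x_0}-\omega\cdot\nabla_{x'}\bigr)A_{m,p}=F\,A_{m,p}+S_{m,p},
\]
where the source $S_{m,p}$ is a polynomial expression in $q^{(k)}(x,\vphi_V)$, derivatives of $\vphi_V$, and previously determined amplitudes $A_{m',p'}$ with $p'<p$, coming from the Taylor expansion of $q$ and from the expansion of $\lan\nabla u,\nabla u\ran_M$, plus the subprincipal term $\square A_{m,p-1}$ from the LHS. This recursion is triangular in $p$, and since the characteristics of $\p_{x_0}-\omega\cdot\nabla_{x'}$ are straight lines each $A_{m,p}$ is obtained by integrating along these characteristics from the initial time, yielding smooth solutions on the whole slab $[T_0,T]_{x_0}\times\mrn_{x'}$. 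Compact support of $q$ in $x$ and of $\vphi,\chi$ keeps the sources smooth with controlled support, so the $A_{m,p}$ remain bounded uniformly on the slab; the reality condition $\overline{A_{m,p}}=A_{-m,p}$ is preserved because the equations and initial data respect the symmetry $m\mapsto-m$ together with complex conjugation.

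The main technical obstacle is the bookkeeping needed to verify that truncating at $N$ produces source terms $S_{m,p}$ involving only amplitudes already constructed, and that the truncation error $\square u_N-q(x,u_N)\bigl((\p_{x_0}u_N)^2-|\nabla_{x'}u_N|^2\bigr)$ is $O(h^{N+1})$ in the relevant norms. Once this approximate identity is in hand, Theorem \ref{approx} (Gu\`es' result, proved in Section \ref{PRGU}) produces the exact solution $u(h,V,W,x)$ with the prescribed data when $x_0\ll0$, and bounds its distance to $u_N$ by an error $\mce_N$ satisfying the stated $L^2$ and $L^\infty$ estimates, with the loss of one power of $h$ per spatial derivative.
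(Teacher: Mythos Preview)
Your approach is essentially that of the paper: build the formal WKB expansion, match harmonics $e^{im\psi/h}$ and powers of $h$, use the null structure $\lan\wt W,\wt W\ran_M=\lan\wt V,\wt V\ran_M=0$ to kill the dangerous self-interaction terms, solve the resulting hierarchy with the data \eqref{defcoeff}, and then invoke Theorem \ref{approx} to convert the approximate solution into an exact one with the stated error bounds. Your identification of why the null condition makes the $O(h^0)$ level close on a single linear transport equation is exactly the point.

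There is one genuine omission. You assert that at every level the matching yields a transport equation
\[
(\p_{x_0}-\omega\cdot\nabla_{x'})A_{m,p}=F\,A_{m,p}+S_{m,p},
\]
but this is only correct for $m\neq0$. When $m=0$ the factor $2im/h$ in your formula for $\square(e^{im\psi/h}A)$ vanishes, so the leading contribution from the left-hand side is $\square A_{0,p}$ itself, and the equation for the mean mode $A_{0,p}$ is a \emph{linear wave equation} (with first-order coefficients built from $q(x,\vphi_V)\vphi_V'\wt V$), not a transport equation along light rays. The paper treats this explicitly: see \eqref{level11} for $A_{0,0}$ and the analogous inhomogeneous wave equation for $A_{0,p+1}$ just below. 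One solves these with zero Cauchy data at $x_0\ll0$ and uses finite speed of propagation to keep the supports compact in $x'$ on each finite slab. This step is routine, but it is a different mechanism from integrating an ODE along characteristics, and without it the recursion you describe does not close.
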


 \subsection{The Inverse Problem}   
 
 The following is a consequence of Theorem \ref{main}

\begin{theorem}\label{inverse} Let $u(h,V,W,x)$ be the solution of \eqref{NLWE} constructed in Theorem \ref{main}.  If  $T'<T$ are large enough,  then $u(h,V,W,x)\bigr|_{\{x_0=T'\}}$  determines $q(x,u)$ uniquely.
\end{theorem}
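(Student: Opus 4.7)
The measurement $u(h,V,W,x)|_{x_0=T'}$ is processed in three stages: extract the leading oscillatory coefficient $A_{1,0}(V,W,\cdot)|_{x_0=T'}$, invert the transport equation \eqref{eqa1p0} to obtain a light-ray transform, then vary the background parameters $(V,\vphi)$ to reconstruct $q(x,u)$ pointwise. For the first stage, expansion \eqref{defuh} with $N$ taken large gives
\[
h^{-1}e^{-\frac{i}{h}\lan x,W\ran_{{}_M}}\bigl(u(h,V,W,(T',x'))-\vphi_{{}_V}(T',x')\bigr)
= A_{1,0}(V,W,(T',x')) + \sum_{m\ne 1}A_{m,0}\, e^{\frac{i(m-1)}{h}\lan x,W\ran_{{}_M}} + O(h).
\]
Testing against any $\psi\in C_0^\infty(\mrn_{x'})$ and letting $h\to 0$, the modes with $m\ne 1$ have non-stationary phase in $x'$ (frequency $(m-1)\omega/h$) and vanish by integration by parts, while the remaining $O(h)$ contributions (including $h^N\mce_N$) vanish. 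Hence $A_{1,0}(V,W,(T',x'))$ is determined by the data as a smooth function of $x'\in\mrn$.

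\emph{From $A_{1,0}$ to a light-ray transform.} The transport equation \eqref{eqa1p0} is a linear first-order ODE along the null vector field $\p_{x_0}-\omega\cdot\nabla_{x'}$, whose integral curves $\gamma_y(s)=(s,y-s\omega)$, $y\in\mrn$, sweep out the family of null lines in $\mrno$ of direction $(1,-\omega)$. Since $\lan\gamma_y(s),W\ran_{{}_M}$ is constant in $s$, so is $\chi_{{}_W}(\gamma_y(s))$, and integrating the ODE yields
\[
A_{1,0}(V,W,\gamma_y(T'))=\tfrac12(A-iB)\chi_{{}_W}(\gamma_y(T'))\exp\!\Big(\int_{-\infty}^{T'}F(V,W,\gamma_y(s))\,ds\Big).
\]
Choosing $\chi,A,B$ so the prefactor is nonzero on every null ray meeting $\supp q$ (trivial since $\chi_{{}_W}$ is constant along each $\gamma_y$ and $\supp q$ is compact), and taking $T'>R$ so $\gamma_y([T',\infty))\cap\supp q=\emptyset$, the data determines the full line integral $\int_{\mr}F(V,W,\gamma_y(s))\,ds$ on every null line of direction $(1,-\omega)$, as $\omega\in\ms^{n-1}$ varies.

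\emph{Recovery of $q$.} Since $F(V,W,x)=\lan\wt V,\wt W\ran_{{}_M}\,q(x,\vphi_{{}_V}(x))\vphi_{{}_V}^\prime(x)$ and $\lan\wt V,\wt W\ran_{{}_M}=\pm1+\theta\cdot\omega$ is nonzero for $\omega\ne\mp\theta$, dividing by this known scalar yields the scalar light-ray transform of $X_{V,\vphi}(x):=q(x,\vphi_{{}_V}(x))\vphi_{{}_V}^\prime(x)$, which is compactly supported on $\mrno$. Classical injectivity of the Minkowski light-ray transform on compactly supported functions (valid for $n\ge 2$) recovers $X_{V,\vphi}$ pointwise. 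Finally, for any $(x_0,u_0)\in\mrno\times\mr$, choose $V$ and $\vphi\in C_0^\infty(\mr)$ with $\vphi(\lan x_0,V\ran_{{}_M})=u_0$ and $\vphi^\prime(\lan x_0,V\ran_{{}_M})=1$; then $X_{V,\vphi}(x_0)=q(x_0,u_0)$, which completes the recovery.

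The principal obstacle is the qualifier ``maximal region determined by the data'': null lines missing the observation slice $\{x_0=T'\}$, tangent to it, or outside the cone of rays accessible to the admissible $(V,W)$ contribute no information. One must verify that for $T'$ sufficiently large the accessible null rays sweep out $\supp q$, which follows from compact support of $q$ and the standard causal geometry. A secondary technical issue is the invocation of light-ray transform injectivity with the correct support and regularity hypotheses; the argument as stated requires $n\ge 2$, and in any dimension one must rule out possible vanishing loci of $A_{1,0}$ intervening in the logarithm by the choice of $\chi,A,B$ above.
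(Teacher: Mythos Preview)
Your proof is correct and takes a genuinely simpler route than the paper's. The key observation you exploit, and the paper does not, is that the integrand in \eqref{eqa1p0} factors as a scalar function times a \emph{constant}: $F(V,W,x)=\langle\wt V,\wt W\rangle_{{}_M}\cdot X_{V,\vphi}(x)$ with $X_{V,\vphi}(x)=q(x,\vphi_{{}_V}(x))\vphi_{{}_V}'(x)$. Dividing by the known constant reduces the problem directly to the \emph{scalar} light-ray transform of the compactly supported function $X_{V,\vphi}$, whose injectivity for $n\ge 2$ is classical; you then read off $q(x_0,u_0)$ by choosing $\vphi$ pointwise. The paper instead keeps the vector-field structure, writing the data as the light-ray transform $L_1$ of the 1-form $\eta_{\vphi,V}=q(x,\vphi_{{}_V})\vphi_{{}_V}'\sum_j V_j\,dx_j$, invokes the kernel characterization $L_1\mcf=0\Leftrightarrow d\mcf'=0$ from Stefanov--Uhlmann, and then proves a separate computational proposition (Proposition~\ref{etxt-der}) showing that $d\eta_{\vphi,V}=0$ for all $\vphi,V$ forces $q=0$. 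Your argument bypasses Proposition~\ref{etxt-der} and the tensor-transform machinery entirely. What the paper's route buys is alignment with the general 1-form light-ray framework, which may generalize more readily to settings (variable coefficients, systems) where the scalar-times-constant-vector factorization fails; what yours buys is a substantially shorter and more transparent proof in the present scalar Minkowski case. Your Stage~1 extraction of $A_{1,0}$ via non-stationary phase is also more explicit than the paper, which simply asserts that the order-$h$ oscillatory coefficient is determined; and your $n\ge 2$ caveat is appropriate, the paper's cited 1-form result carrying an analogous restriction.
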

\begin{proof}  If we set $x^\prime=y+ x_0\omega$ and $x_0=s,$ equation \eqref{eqa1p0} becomes
 \[
 \begin{split}
& \p_s A_{1,0}(V,W,s,y)= F(V,W,s, y+s \omega) A_{1,0}(V,W,s,y), \\
 & A_{1,0}(V,W,s,y)= \ha \chi_{{}_W}(s, y+s\omega) (A-iB) \text{ for } s\ll 0.
 \end{split}
 \]
 Therefore
 \[
 A_{1,0}(V,W,s,y)=\ha \chi_{{}_W}(s, y+s\omega) (A-iB) \exp\bigl( \int_{-\infty}^s F(V,W,\nu, y+\nu \omega) \ d\nu\bigr),
 \]
 and since $y=x'-s\omega$ and $s=x_0$,
 \[
  A_{1,0}(V,W,x_0,x^\prime)=\ha \chi_{{}_W}(x) (A-iB) \exp\bigl( \int_{-\infty}^{x_0} F(V,W,x_0+\nu, x^\prime+\nu \omega) \ d\nu\bigr).
  \]
  
 Since $q$ is compactly supported, if $T'<T$ is large enough,
 \[
 \begin{split}
 & A_{1,0}(V,W,T^\prime,x^\prime)=\ha \chi_{{}_W}(x) (A-iB) \exp\bigl( \int_{-\infty}^\infty F(V,W,T'+\nu, x^\prime+\nu \omega) \ d\nu\bigr)= \\
 & \ha \chi_{{}_W}(x) (A-iB) \exp\bigl( \int_{-\infty}^\infty \lan \mcf(V,T'+\nu, x^\prime+\nu \omega), \wt W\ran  \ d\nu\bigr).
 \end{split}
   \]
 where $\lan,\ran$ is the Euclidean dot product.  By  shifting the time variable by $-T'$ we may just take $T'=0.$  The integral
  \[
 L_1(\mcf)(x^\prime,V,W)= \int_\mr \lan \mcf(V, \nu, x^\prime  + \nu\omega), \wt W\ran \ d\nu, \;\ \wt W=(1,\omega), \;\ \omega \in \ms^{n-1}
 \]
 is defined to be the future light-ray transform of the vector field $\mcf$ defined in \eqref{eqa1p0},  see for example Section III.4 of \cite{SteUhl}.

We conclude from Theorem \ref{main}  that  the solution $u(h,V,W,0, x')$ given by \eqref{defuh}  satisfies
 \[
 \begin{split}
 u(h,V,0,x')= &  \vphi_{{}_V}(x)+ h A_{0,0}(x) + h A\chi_{{}_W}(x) e^{L_1(\mcf)(x,V,W)} \cos\bigl(\ooh \lan x,W\ran_{{}_M}\bigr) + \\
&  h B \chi_{{}_W}(x) e^{L_1(\mcf)(x,V,W)} \sin\bigl(\ooh \lan x,W\ran_{{}_M}\bigr) + O(h^2).
\end{split}
 \] 
 and so we deduce that the coefficient of amplitude $h$ of the oscillatory part of $u(h,V,0,x^{\prime\prime})$ determines the light ray transform of $\mcf(x,V)$.  If  $q_1(x,u)$ and $q_2(x,u)$ are compactly supported in $x$ and the corresponding solutions defined by \eqref{defuh} agree at time $x_0=T'$, for $T'<T$ large enough, then $L_1(\mcf_{q_1}- \mcf_{q_2})=0$, where $\mcf_\bullet$ is the vector field defined in \eqref{eqa1p0} corresponding to $\bullet=q_1,q_2.$  But according to Proposition III.4.3 of \cite{SteUhl}, see also \cite{Sia},  if $q(x,u)$ is compactly supported in $x$, $L_1(\mcf)=0$ if and only if the exterior derivative $d \mcf'=0$, where $\mcf'$ is the one-form dual of $\mcf.$  The result follows from the following:
\begin{prop}\label{etxt-der}  Let $q(x,u)\in C^\infty$, be compactly supported in $x$.  If 
 $\vphi\in C_0^\infty(\mr)$  and $V=(\pm 1,\theta),$ $\theta \in \msn$,  let $\eta_{{}_{\vphi,V}}$ be the one form defined by
\[
\eta_{{}_{\vphi,V}}= \sum_{j=0}^n q(x,\vphi_{{}_V}(x)) \vphi^\prime_{{}_V}(x) V_j \ dx_j, 
\]
then  $d\eta_{{}_{\vphi,V}}=0$ for all $\vphi$ and all $V$  if and only if $q=0.$
\end{prop}
\begin{proof} We have that
\[
\begin{split}
& d \eta_{{}_{\vphi,V}}= 
\sum_{j=0}^n \sum_{m<j}   \biggl( \p_{x_m} \bigl( q(x, \vphi_{{}_V}(x)) \vphi_{{}_V}^\prime(x) \wt V_j\bigr)-  \p_{x_j} \bigl( q(x, \vphi_{{}_V}(x)) \vphi_{{}_V}^\prime(x) \wt V_m\bigr)\biggr) dx_m \wedge dx_j,
\end{split}
\]
and so, $d \eta_{{}_{\vphi,V}}= 0$ if and only if
\[
 \p_{x_m} \bigl( q(x, \vphi_V(x)) \vphi_{{}_V}^\prime(x) \wt V_j\bigr)-  \p_{x_j} \bigl( q(x, \vphi_V(x)) \vphi_{{}_V}^\prime(x) \wt V_m\bigr)=0, \text{ for all } m<j 
 \]

 But
\[
\begin{split}
 & \p_{x_m} \bigl( q(x, \vphi_V(x)) \vphi_{{}_V}^\prime(x) \wt V_j\bigr)=
 ( \p_{x_m}  q) (x, \vphi_V(x)) \vphi_{{}_V}^\prime(x) \wt V_j+ \\
&  (\p_u  q) (x, \vphi_V(x)) \bigl(\vphi_{{}_V}^\prime(x)\bigr)^2 \wt V_j \wt V_m+ 
  q(x, \vphi_V(x)) \vphi_{{}_V}^{\prime\prime}(x) \wt V_j \wt V_m.
  \end{split}
  \]
 So we conclude that $d \eta_{{}_{\vphi,V}}= 0$ if and only if
 \beq\label{GR}
 \begin{split}
&  \biggl( (\p_{x_m}q)(x,\vphi_{{}_V}(x)) \wt V_j- (\p_{x_j} q)(x,\vphi_{{}_V}(x)) \wt V_m \biggr)\vphi_{{}_V}^\prime(x)=0, \\
&  \text{ for all } V=(\pm 1,\theta) \text{ and } m<j \text{ and all } \vphi\in C_0^\infty(\mr).
\end{split}
 \eeq
 If $m=0,$ $\wt V_0=\pm 1$, for $j>0$ fixed, we can choose $\wt V$ such that  $\wt V_j=0$ and so
 \[
 (\p_{x_j} q)(x,\vphi_{{}_V}(x)) \vphi_{{}_V}^\prime(x)=0,  \text{ for all } \vphi\in C_0^\infty(\mr).
 \]
 For this choice of $V$, for any $u\in \mr$, and each $x$ we can choose a $\vphi$ such that $\vphi_{{}_V}^\prime(x)=1$ and $\vphi_{{}_V}(x)=u$. This implies that $\p_{x_j} q(x,u)=0$, for all $j>0.$ Plugging this back into \eqref{GR}, it  implies that  $(\p_{x_0} q)(x,\vphi_{{}_V}(x)) \vphi_{{}_V}^\prime(x)=0$ for all $\vphi$. So $\p_{x_0} q(x,u)=0$ and since $q(x,u)$ is compactly supported, $q(x,u)=0$. This ends the proof of the Proposition.
\end{proof}

This proves uniqueness. One can actually invert this light ray transform,  but it is unstable and we refer the reader to Section III.4 of \cite{SteUhl}, see also \cite{Sia,Ste}.
In the case  $q(x,u)=q(x',u)$, i.e. when  $q$ does not depend on $x_0$, the light ray transform becomes the $X$-ray transform, which is stably  invertible, see again \cite{Sia,Ste,SteUhl}.

\end{proof}
 \section{Approximate Solutions of Equation \eqref{NLWE}}

We will construct approximate solutions of \eqref{NLWE} of the following form:
 
 \begin{theorem}\label{mainA}  Let $q(x,u)$ be supported in $\{|x|\leq R\} \times \mr$.  Let  $\chi, \vphi \in C_0^\infty(\mr)$,  be real valued, let $\theta, \omega\in \ms^{n-1},$ $\omega\not=\theta$  and,   $V=( \pm 1,\theta)$ and  $W=( 1,\omega)$.  For $A,B\in \mr$ let
  \[
 u_{inc}(h,V,W,x)= \vphi_{{}_V}(x) + h \chi_{{}_W}(x) \biggl( A \cos\biggl(\ioh \bigl( \lan x,W\ran_{{}_M}\bigr)\biggr)+ B \sin\biggl(\ioh \bigl( \lan x,W\ran_{{}_M}\bigr)\biggr)\biggr),
 \]
 then for any $N\in \mn,$  there exists an approximate solution $u_{N}(h,V,W,x)$ of \eqref{NLWE}  in the sense that
\beq\label{estGN}
\begin{split}
& \square u_{N}-  Q(x,u_{N},\nabla u_{N}) = h^{N+1} G_N(h,V,W, x), \\
&  \text{ such that }   u_N(h,V,W,x)= u_{inc}(h,V,W,x)    \text{ for }  x_0\ll 0, \text{ outside the support of } q, \\
&G_N\in C^\infty  \text{ and for any } k \in \mn, \;\ T_0,T \in \mr,  \;\ G_N \text{ satisfies } \\
& \sup_{x_0\in [T_0,T]}  || \p_{x_0}^ r D_{x'}^\alpha G_N(h,V,W,x)||_{L^2(\mrn_{x^{\prime}})} \leq C(N,k,T_0,T)  h^{-k}, \;\ r+ |\alpha|=k, \\
& \sup_{x_0\in [T_0,T]}  || D_x^\alpha G_N(h,V,W,x)||_{L^\infty(\mrn_{x^{\prime}})} \leq C(N,k,T_0,T)  h^{-k}.
\end{split}
\eeq
 Moreover, $u_N(h,V,W,x)$ has an expansion of the form
\beq\label{defuh1}
\begin{split}
& u_{N}(h,V,W,x)=  \vphi_{{}_V}(x)
+ h \sum_{p=0}^N h^p v_p(h,V,W,x), \;\ u_0(V,x)=\vphi_{{}_V}(x),  \text{ such that } \\
& v_p(h,V,W, x) = \sum_{\pm m=0}^N  v_{m,p}(V,W,x), \;\ v_{m,p}(V,W,x)= e^{\frac{im}{h} \lan x,W\ran_{{}_M}}  A_{m,p}(V,W,x),
\end{split}
 \eeq
where the coefficients satisfy the initial conditions \eqref{defcoeff} and $A_{1,0}$ satisfies the transport equation \eqref{eqa1p0}. Moreover, $u_N$ is unique in the sense that if $\wt u_N(h,V,W,x)$ satisfies \eqref{estGN}, then 
\beq\label{uniqueness}
\sup_{x_0\in [T_0,T]} \|u_N-\wt u_N\|_{H^k(\mrn)}=O(h^{N+1-k}), \;\ k\in \mn.
\eeq
The same construction works for $W=(-1,\omega)$ with only minor changes. 
 \end{theorem}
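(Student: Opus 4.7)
The plan is to construct $u_N$ by a formal WKB expansion in $h$ and in the oscillatory modes $e^{im\phi/h}$ where $\phi = \lan x, W\ran_M$, then solve the resulting transport equations iteratively. Writing $u_N = \vphi_{{}_V} + h w$ with $w = \sum_{p=0}^N h^p v_p$ and using \eqref{equ0} to eliminate the background contribution, equation \eqref{NLWE} reduces to
\[
\square w + 2 q(x, \vphi_{{}_V} + hw) \lan \nabla \vphi_{{}_V}, \nabla w \ran_M + h\, q(x, \vphi_{{}_V} + hw) \lan \nabla w, \nabla w \ran_M = 0,
\]
with an appropriate sign convention on the Minkowski inner product arising from the null form. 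I would then Taylor expand $q(x, \vphi_{{}_V} + hw) = \sum_{k \geq 0} \frac{(hw)^k}{k!} (\p_u^k q)(x, \vphi_{{}_V})$, substitute $v_p = \sum_m e^{im\phi/h} A_{m,p}$, and collect like powers of $h$ and like oscillatory modes.

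The essential computation is that the eikonal equation $\lan \wt W, \wt W \ran_M = 0$ (which holds because $W$ is null) gives
\[
\square\bigl(e^{im\phi/h} A\bigr) = e^{im\phi/h}\bigl( -2(im/h) L A + \square A \bigr), \qquad L = \p_{x_0} - \omega \cdot \nabla_{x'},
\]
so no $h^{-2}$ term survives, and likewise the leading $h^{-2}$ contribution to $h \lan \nabla w, \nabla w \ran_M$ involves $\lan \wt W, \wt W \ran_M = 0$ and vanishes. Collecting the $h^{p-1} e^{im\phi/h}$ coefficient of the reduced equation then yields, for $m \neq 0$, a transport equation $(L - F) A_{m,p} = \Phi_{m,p}$, where $\Phi_{m,p}$ is a polynomial in previously determined coefficients and their first spatial derivatives; at the base case $p = 0$, $m = \pm 1$ we have $\Phi_{\pm 1, 0} = 0$, recovering \eqref{eqa1p0} with initial data \eqref{defcoeff}. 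For $m = 0$ the equation is of wave type, $\square A_{0, p} = \Phi_{0,p}$, with zero Cauchy data at $x_0 \ll 0$.

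I would solve these equations inductively. The transport operator $L$ has straight characteristics $s \mapsto (s, y + s\omega)$, so integration along them produces each $A_{m,p}$ as a smooth function supported away from $x_0 \ll 0$; the wave equations for $A_{0,p}$ are solved by standard energy methods, with the source vanishing for $x_0 \ll 0$ by induction. After constructing all coefficients with $|m|, p \leq N$, the residual $h^{-(N+1)}(\square u_N - Q(x, u_N, \nabla u_N))$ is a finite sum of smooth oscillatory terms of bounded amplitude whose spatial derivatives produce the usual $h^{-1}$ factors, yielding the estimates \eqref{estGN}. For the uniqueness claim \eqref{uniqueness}, I would subtract two such approximate solutions with matching initial data: the difference $r$ satisfies a linear hyperbolic equation with smooth coefficients (the linearization of \eqref{NLWE} around $u_N$) with source $O(h^{N+1})$, so standard energy estimates give $\|r\|_{H^k} = O(h^{N+1 - k})$.

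The main technical obstacle is the combinatorial bookkeeping in the expansion of $q(x, \vphi_{{}_V} + hw) \lan \nabla w, \nabla w \ran_M$ into oscillatory modes and powers of $h$: one must verify that $\Phi_{m, p}$ depends only on previously constructed amplitudes so that the recursion closes in the correct order, and that every resonance $m = m_1 + \cdots + m_k$ arising from products of modes carries either an extra power of $h$ or an already-resolved factor. The null condition is essential here because it eliminates the would-be $h^{-2}$ contributions from self-interactions of highest modes (each such term carries a factor $\lan \wt W, \wt W \ran_M = 0$), so that no spurious principal coupling appears that would overdetermine the system.
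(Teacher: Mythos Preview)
Your proposal is correct and follows essentially the same route as the paper: a WKB ansatz in powers of $h$ and oscillatory modes $e^{im\phi/h}$, cancellation of the would-be $h^{-2}$ terms via the null condition $\lan \wt W,\wt W\ran_{{}_M}=0$, transport equations along $L=\p_{x_0}-\omega\cdot\nabla_{x'}$ for the modes $m\neq 0$, wave-type equations for the $m=0$ modes, and an inductive construction with error bounds read off from the oscillatory remainder. One minor imprecision worth flagging: the $m=0$ equation is not a pure inhomogeneous wave equation $\square A_{0,p}=\Phi_{0,p}$ with source depending only on earlier coefficients, but carries a first-order term $2q(x,\vphi_{{}_V})\lan\nabla\vphi_{{}_V},\nabla A_{0,p}\ran_{{}_M}$ coming from the linearization of the null form around the background (this is visible already in your own reduced equation for $w$); it remains a linear hyperbolic equation solved by the energy methods you invoke, so the argument is unaffected.
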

 
 \begin{proof}  To find the coefficients of the expansion of $u_N$ given by 
 the first equation of \eqref{defuh} we first notice that
 \[
 \square \bigl(e^{i \frac{m}{h} \lan x,W\ran_{{}_M}} v\bigr)=
 e^{i \frac{m}{h} \lan x,W\ran_{{}_M}}\biggl( \frac{2im}{h} T v+\square v\biggr), \text{ where }
 T= \p_{x_0}- \omega\cdot \nabla_{x'}
 \]
  Let us denote $u_0(V,x)=\vphi_{{}_V}(x)$, and since $\square u_0=0,$  if $u_N$ is given by \eqref{defuh}, then
 \beq\label{LHS}
 \begin{split}
& \square u_N=  \sum_{p=0}^N h^{p+1} \square A_{0,p} + \sum_{p=0}^N 
 \sum_{\pm m=1}^N e^{\frac{im}{h} \lan x,W\ran_{{}_M}}\bigl( 2im h^p T A_{m,p} + h^{p+1} \square A_{m,p}\bigr)= \\
& \sum_{\pm m=1}^N  e^{\frac{im}{h} \lan x,W\ran_{{}_M}}\bigl(2im T A_{m,0}\bigr)+ 
 \sum_{p=1}^N h^p\bigl( \sum_{\pm m=1}^N  e^{\frac{im}{h} \lan x,W\ran_{{}_M}} 
\bigl( 2i m TA_{m,p}+ \square A_{m,p-1}\bigr) \bigr) + h \square A_{0,p}.
\end{split}
\eeq
 Let us denote
 \[
 \begin{split}
& \lan V,W\ran_{{}_M}= \sum_{j,k=0}^n Q_{j,k} V_j W_k, \;\ Q_{0,0}=-1, \;\ Q_{j,0}=Q_{0,j}=0 \;\ j \geq 1 \text{ and }
 Q_{j,k}=\del_{j,k}, \;\ j,k \geq 1, \\
 & \text{ and define } Q_{j,k}(x,u)=: q(x,u) Q_{j,k}.
 \end{split}
 \]
 Since  $\lan \nabla u_0(x),\nabla u_0(x)\ran_{{}_M} =0$, we have
 \[
 \begin{split}
&  Q(x,u_N, \nabla_x u_N)=  \sum_{j,k=0}^n Q_{j,k}(x,u_N) \bigl(\p_{x_j}u_0+ h \sum_{p=0}^N h^p \p_{x_j} v_p\bigr)\bigl(\p_{x_k}u_0+ h\sum_{p=0}^N  h^p \p_{x_k} v_p\bigr)= \\
& \sum_{j,k=0}^n\sum_{p=0}^N h^{p+1} Q_{jk}(x,u_N)\bigl(\p_{x_j} u_0 \p_{x_k} v_p+ \p_{x_k} u_0 \p_{x_j} v_p\bigr)+ \\
& \sum_{j,k=0}^n\sum_{p=0}^{2N} h^{p+2}\sum_{\mu+\nu=p}  Q_{jk}(x,u_N)\bigl(\p_{x_j} v_\mu \p_{x_k} v_\nu+ \p_{x_k} v_\nu \p_{x_j} v_\mu\bigr)
 \end{split}
 \]
 Notice that
\[
\begin{split}
&  h Q_{jk}(x,u_N)\bigl(\p_{x_j} u_0 \p_{x_k} v_p+  \p_{x_k} u_0 \p_{x_j} v_p\bigr)= \\
& \sum_{\pm m=1}^N e^{\frac{im}{h} \lan x,W\ran_{{}_M}} im A_{m,p} Q_{jk}(x,u_N)\bigl( (\p_{x_j} u_0) \wt W_k + (\p_{x_k} u_0) \wt W_j\bigr)+ \\
& h \sum_{\pm m=0}^N e^{\frac{im}{h} \lan x,W\ran_{{}_M}}  Q_{jk}(x,u_N)\bigl( (\p_{x_j} u_0) (\p_{x_k} A_{m,p})+ (\p_{x_k} u_0)( \p_{x_j} A_{m,p}).
\end{split}
\]
Similarly, using that $\lan \wt W, \wt W\ran_{{}_M}=0,$ we find that
\[
\begin{split}
& h  \sum_{j,k=0}^n Q_{jk}(x,u_N)\bigl(\p_{x_j} v_\mu \p_{x_k} v_\nu+  \p_{x_k} v_\nu \p_{x_j} v_\mu\bigr)= \\
& \sum_{j,k=0}^n \sum_{\pm l=0}^N \sum_{\pm m=0}^N e^{\frac{i(m+l)}{h} \lan x,W\ran_{{}_M}} 
 Q_{jk}(x,u_N)\biggl( im  A_{m,\mu} \wt W_j\p_{x_k} A_{l,\nu}+  il  A_{l,\nu} \wt W_k\p_{x_j} A_{m,\mu}\biggr)+ \\
&h \sum_{j,k=0}^n \sum_{\pm l=0}^N \sum_{\pm m=0}^N e^{\frac{i(m+l)}{h} \lan x,W\ran_{{}_M}}  Q_{jk}(x,u_N)\biggl( \p_{x_j} A_{m,\mu}\p_{x_k} A_{l,\nu} + \p_{x_k} A_{m,\mu}\p_{x_j} A_{l,\nu} \biggr).
\end{split}
\]
So we conclude that
\beq\label{RHS}
\begin{split}
& Q(x,u_N,\nabla u_N)= 
\sum_{j,k=0}^n\sum_{p=0}^N h^p  \sum_{\pm m=1}^N e^{\frac{im}{h} \lan x,W\ran_{{}_M}} im A_{m,p} Q_{jk}(x,u_N)\biggl( (\p_{x_j} u_0) \wt W_k + (\p_{x_k} u_0) \wt W_j\biggr)+ \\
&h \sum_{j,k=0}^n \sum_{p=0}^N  h^p \sum_{\pm m=0}^N e^{\frac{im}{h} \lan x,W\ran_{{}_M}}  Q_{jk}(x,u_N)\biggl( (\p_{x_j} u_0) (\p_{x_k} A_{m,p})+ (\p_{x_k} u_0)( \p_{x_j} A_{m,p}) \biggr) + \\
 & h \sum_{j,k=0}^n \sum_{p=0}^{2N} h^p \sum_{\mu+\nu=p} \ \sum_{\pm l=0, \pm m=0}^N e^{\frac{i(m+l)}{h} \lan x,W\ran_{{}_M}}  Q_{jk}(x,u_N)\biggl( im  A_{m,\mu} \wt W_j\p_{x_k} A_{l,\nu}+  il   A_{l,\nu} \wt W_k\p_{x_j} A_{m,\mu}\biggr)+ \\
& h^2\sum_{j,k=0}^n  \sum_{p=0}^{2N} h^p \sum_{\mu+\nu=p} \ \sum_{\pm l=0, \pm m=0}^N  e^{\frac{i(m+l)}{h} \lan x,W\ran_{{}_M}} Q_{jk}(x,u_N) \biggl( \p_{x_j} A_{m,\mu}\p_{x_k} A_{l,\nu} +  \p_{x_k} A_{m,\mu}\p_{x_j} A_{l,\nu} \biggr).
\end{split}
\eeq

We recall that   $u_N= u_0+ h v$, with  $v=\sum_{p=0}^N h^p v_p$, and so it follows from \eqref{LHS} and \eqref{RHS} that $u_N$ satisfies \eqref{estGN} if and only if  the following identities hold for the terms with $p=0$:
\beq\label{level1}
\begin{split}
& T A_{m,0}=-\ha \sum_{j,k=0}^n Q_{j,k}(x, u_0) \biggl( (\p_{x_j} u_0) \wt W_k + (\p_{x_k} u_0) \wt W_j\biggr) A_{m,0},\;\ m\geq 1, \\
& A_{1,0}= \ha \chi\bigl(\lan x,W\ran_{{}_M} \bigr) (A-i B), \text{ and }  \;\ A_{m,0}=0 \text{ for } x_0\ll0, \;\ m\geq 1,
\end{split}
\eeq
and 
\beq\label{level11}
\begin{split}
\square A_{0,0}= & \sum_{j,k=0}^n Q_{jk}(x,u_0)\biggl( (\p_{x_j} u_0) (\p_{x_k} A_{0,0})+ (\p_{x_k} u_0) (\p_{x_j} A_{0,0})\biggl), \\
& A_{0,0}=0 \text{ for } x_0\ll0.
\end{split}
\eeq
Therefore, this determines $A_{m,0}$ for $m\in \mbz$ and $|m|\leq N$ and hence it determines $v_0$. Notice that, by finite speed of propagation,  for $x_0\in [T_0,T]$,    $A_{m,0}$ is compactly supported in $x'$.

Now, suppose we have determined $A_{m,j}$, for $j\leq p$ and  $m\in \mbz$, $|m|\leq N,$  so that \eqref{estGN} holds, and hence we have determined $v_j,$ with $j\leq p,$  and we  want to determine $A_{m,p+1}$, $m\in \mbz$, $|m|\leq N.$

We need to take into account the powers of $h$ coming from $Q_{j,k}(x, u_N)$ and its Taylor's expansion to order one gives
 \[
 \begin{split}
 & Q_{j,k}(x,u_N)= Q_{j,k}(x,u_0+ h v)=  Q_{j,k}(x, u_0)+ h(\p_u Q_{j,k})(x,u_0) v_0+ h^2  \wt Q_{j,k}(h,x,V,W)
 \end{split}
 \]

 Notice that, the third and fourth terms of \eqref{RHS} will only have terms with  $\mu+\nu=p\geq 0$, and those involve previously computed terms.   For the terms $m\not=0$ we obtain
the following equation
\[
\begin{split}
-2 T A_{m,p+1}= &  Q_{jk}(x,u_0)\biggl( (\p_{x_j} u_0) \wt W_k + (\p_{x_k} u_0) \wt W_j\biggr)A_{m,p+1} + \mce_{m,p}( u_0, v_0, \ldots v_p), \\
& A_{m,p+1}=0 \;\ x_0\ll 0,
\end{split}
\]
and for $m=0$ 
\[
\begin{split}
\square A_{0,p+1}= & Q_{jk}(x,u_0)\biggl( (\p_{x_j} u_0) (\p_{x_k} A_{0,p+1})+ (\p_{x_k} u_0) (\p_{x_j} A_{0,p+1})\biggl)+ \wt \mce_{p}( u_0, v_0, \ldots v_p), \\ 
& A_{0,p+1}=0 \text{ for } x_0\ll0.
\end{split}
\]
Notice that it follows from \eqref{RHS} that $\wt \mce_{p}( u_0, v_0, \ldots v_p)$ and $ \mce_{m,p}( u_0, v_0, \ldots v_p)$ are compactly supported in $x'$ on any finite interval $x_0\in [T_0,T]$.  
These equations can be solved and this establishes the existence of the asymptotic expansion.

To prove \eqref{estGN} for the error term, we just need to observe that it follows from the discussion that 
for any finite interval $[T_0,T]$,
\[
 G_N(h,V,x) =\sum_{m=0}^N e^{\ioh \lan x,W\ran_{{}_M}} \mcg_m(h, V,W,x), \text{ with }  \mcg_m(h, V,W,x) \in C^\infty([T_0,T]; C_0^\infty(\mrn_{x^\prime})),
 \]
and hence the estimate \eqref{estGN} is obvious. The uniqueness of the solution in the sense of \eqref{uniqueness} also follows from the proof, since the coefficients of the expansion will satisfy the same transport and wave equations.
\end{proof}

 \section{From Approximate Solutions to Solutions of  \eqref{NLWE}}\label{PRGU}

In general one can only find long time, or even local, solutions of  \eqref{NLWE}  for small energy data, see for example \cite{Kla,KlaMac,Sog}, which would not suffice to solve the inverse problem.  The existence and uniqueness of solutions of \eqref{NLWE} with oscillatory initial data on an interval of time which does not depend on $h$ was established in \cite{Del}. However, this result still does not give control on the size of the time interval.     The key point here is that the existence of an approximate solution gives quite a bit of information.  Gu\`es \cite{Gue} has shown that the existence of a (one parameter family of) suitable approximate solutions of \eqref{NLWE} in $\Omega=[T_0,T]_{x_0} \times \mrn_{x^{\prime}}$ guarantees the existence of  unique (in the sense of Theorem \ref{approx} below) one parameter family of solutions of \eqref{NLWE} in the same region $\Omega$ with the same initial data.  One could write \eqref{NLWE} as a nonlinear system and verify that the hypotheses of Theorem 1.1 of \cite{Gue} are satisfied and  obtain the result we need.   However, for the convenience of the reader, and without claiming any originality,  we give a proof of Theorem 1.1 of \cite{Gue}  in this particular case.  We note that the results of \cite{Gue} are for quasilinear systems and necessarily more technical.  The results of \cite{Gue} have been used in \cite{EptSte1,EptSte2} to study inverse problems.  

    As usual, for $m\in \mr$ and $1\leq p\leq \infty$, $W^{m,p}$ will denote the standard $L^p$ based Sobolev spaces, and $H^m= W^{m,2}$. As in \cite{Gue} we use semiclassical Sobolev spaces, see \cite{Zwo} for more details on these spaces.     For $\Omega=[T_0,T]_{x_0} \times \mrn_{x^{\prime}}$, $\rho>0$, $\eps>0$ and $m\in \mn$,  we say that
\beq\label{GspB}
\begin{split}
& w_h \in \mcb_{\rho,\eps}^m(\Omega) \text{ if }  w_h, D w_h  \in C^0\bigl( [T_0,T], H^m( \mrn_{x^{\prime}})), \text{ and } \\ 
& \sup_{x_0 \in [T_0,T]} \| (h D_{x^{\prime}})^k  w_h(x_0, \bullet)\|_{{}_{H^1(\mrn)}} +
\sup_{x_0 \in [T_0,T]} \|D_t (h D_{x'}) k w_h(x_0, \bullet)\|_{{}_{L^2(\mrn)}}  \leq \rho, \\
& \text{ for all } 0\leq k\leq m, \text{ and } h\in (0,\eps] .
\end{split}
\eeq

Similarly, we say that
\beq\label{GspA}
\begin{split}
& w_h \in \mca_{\rho,\eps}^m(\Omega) \text{ if }  w_h, D w_h  \in C^0\bigl( [T_0,T], W^{m,\infty} ( \mrn_{x^{\prime}})), \text{ and for all }  1\leq k\leq m \text{ and } h\in (0,\eps], \\ 
& \sup_{x_0 \in [T_0,T]} \|w_h(x_0, \bullet)\|_{W^{1,\infty}(\mrn)}  \leq \rho, \;\  \sup_{x_0 \in [T_0,T]}\| (h D_{x^{\prime}})^k  w_h(x_0,\bullet)\|_{L^\infty(\mrn)} \leq \rho h, \\
&  \text{ and } \sup_{x_0 \in [T_0,T]} \| (h D_{x^{\prime}})^k  (D w_h)(x_0,\bullet)\|_{L^\infty(\mrn)} \leq \rho.
\end{split}
\eeq
 
 We will prove the following:
\begin{theorem}\label{approx}  Let $m\in \mn,$ $m> \novt+1$, and $M> m$, and let $\Omega= [T_0,T]_{x_0} \times \mrn_{x^{\prime}}$ and let  $f(x,u, w)\in C^\infty\bigl(\mr^{n+1}\times \mr \times \mr^{n+1}\bigr)$.   For any $\rho>0$, there exists $\eps_\rho>0$ and $r>0$ such that for any 
 $v_h\in \mca_{\rho,\eps_\rho}^{m+2}(\Omega)$ which satisfies
\beq\label{defvh}
\square v_h= f(x, v_h, D v_h)+ h^M G_h(x), \;\  G_h \in \mcb_{\rho,1}^m(\Omega),
\eeq
there exists a unique $u_h$,  $h\in (0, \eps_\rho]$,  such that $u_h-v_h \in h^M \mcb_{r,\eps_\rho}^m(\Omega)$ and  
\beq\label{semi}
\begin{split}
& \square u_h= f(x, u_h, D u_h),\\
& u\bigr|_{\{ x_0=T_0\}}= v_h\bigr|_{\{ x_0=T_0\}}, \;\   (\p_{x_0} u)\bigr|_{\{ x_0=T_0\}}= (\p_{x_0} v_h)\bigr|_{\{ x_0=T_0\}}.
\end{split}
\eeq
\end{theorem}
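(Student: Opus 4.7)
The plan is to write $u_h = v_h + h^M w_h$ with unknown correction $w_h$ satisfying zero Cauchy data at $x_0 = T_0$, and to solve for $w_h$ in $\mcb^m_{r,\eps_\rho}(\Omega)$. Subtracting \eqref{defvh} from \eqref{semi} and Taylor expanding $f(x, v_h + h^M w_h, Dv_h + h^M Dw_h)$ around $(v_h, Dv_h)$, dividing by $h^M$ reduces \eqref{semi} to a semilinear Cauchy problem
\[
\square w_h - a_h(x) w_h - b_h(x) \cdot Dw_h = -G_h + h^M N_h(x, w_h, Dw_h), \qquad w_h\big|_{x_0=T_0} = \p_{x_0} w_h\big|_{x_0=T_0} = 0,
\]
where $a_h = \p_u f(x, v_h, Dv_h)$, $b_h = \nabla_w f(x, v_h, Dv_h)$ have $\mca^{m+1}$-norms bounded uniformly in $h$ (because $v_h \in \mca^{m+2}_{\rho,\eps_\rho}$), and $N_h$ is smooth in all arguments and at least quadratic in $(w_h, Dw_h)$.

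\textbf{Linear semiclassical energy estimate.} The key analytic input is an estimate for the linear Cauchy problem
\[
\square z - a z - b\cdot Dz = F, \qquad z\big|_{x_0=T_0} = \p_{x_0} z\big|_{x_0=T_0} = 0,
\]
of the form $\|z\|_{\mcb^m_{C,\eps}(\Omega)} \le K(\rho, T-T_0)\|F\|_{\mcb^m_{C,\eps}(\Omega)}$ with $K$ independent of $h \in (0, \eps]$. I would prove this by applying $(hD_{x'})^\alpha$, $|\alpha|\le m$, to the equation and running the classical $L^2$ energy identity for $\square$ with a zeroth- and first-order perturbation. The commutators $[hD_{x'}, a]$ and $[hD_{x'}, b]$ each carry an extra factor of $h$ and are therefore absorbed as lower-order perturbations; Gronwall's inequality then closes the estimate in the norm \eqref{GspB}.

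\textbf{Picard iteration.} Define $\Phi : w \mapsto z$, where $z$ solves the linear Cauchy problem above with forcing $F = -G_h + h^M N_h(x, w, Dw)$ and zero data. Since $m > n/2 + 1$, the embedding $H^m(\mrn) \hookrightarrow W^{1,\infty}(\mrn)$ together with tame product/Moser estimates in the semiclassical norm gives $\|N_h(\cdot, w, Dw)\|_{\mcb^m} \le C_\rho \|w\|_{\mcb^m}^2$, uniformly in $h$. Combined with the linear estimate, for $w$ in the closed ball $\overline{B}_r \subset \mcb^m_{r,\eps}(\Omega)$ one obtains $\|\Phi(w)\|_{\mcb^m} \le K\rho + K C_\rho h^M r^2$. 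Choosing $r := 2K\rho$ and then $\eps_\rho$ small enough that $K C_\rho \eps_\rho^M r \le \tfrac12$ gives $\Phi(\overline{B}_r) \subset \overline{B}_r$; a parallel computation on $\Phi(w_1) - \Phi(w_2)$, using that $N_h$ has Lipschitz constant $O(r + h^M)$ on this ball, shows $\Phi$ is a contraction for $\eps_\rho$ sufficiently small. The resulting fixed point $w_h$ yields $u_h := v_h + h^M w_h$ solving \eqref{semi}.

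\textbf{Main obstacle and uniqueness.} The principal technical difficulty is the tame Moser/product estimate for $N_h$ in the \emph{semiclassical} space $\mcb^m$: one must verify that each of the $m$ semiclassical derivatives applied to the composition $N_h(x, w, Dw)$ produces only terms controlled by $\|w\|_{\mcb^m}$ with constants uniform in $h$. This hinges on noting that $(hD_{x'})^k$ acting on a smooth function of $(x, w, Dw)$ distributes at most $k$ factors of $(hD_{x'}) w$ or $Dw$ after the chain rule, so the $W^{1,\infty}$ bounds encoded in $\mca^{m+2}_{\rho, \eps_\rho}$ together with $\mcb^m \hookrightarrow \mca^0$ control all the factors, and $m > n/2 + 1$ gives the algebra property needed to close the product estimate. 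Uniqueness of $u_h$ within $v_h + h^M \mcb^m_{r,\eps_\rho}$ follows by subtracting two such solutions: the difference $d$ satisfies a linear wave equation of the same type with zero data and zero source, so the energy estimate forces $d \equiv 0$.
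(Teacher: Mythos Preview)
Your overall strategy --- write $u_h=v_h+h^M w_h$, linearize around $v_h$, and close a Picard iteration using the smallness of the $h^M N_h$ remainder --- is a reasonable variant of the scheme and would in principle work. However, there is a real gap exactly where you flag the ``main obstacle'': the claimed embedding $\mcb^m\hookrightarrow\mca^0$ (i.e.\ a uniform-in-$h$ bound $\|w\|_{W^{1,\infty}}\le C\|w\|_{\mcb^m}$) is \emph{false} in dimension $n\ge 2$. The $\mcb^m$ norm in \eqref{GspB} controls only \emph{one} classical spatial derivative in $L^2$ (the $H^1$ part); the remaining $m$ derivatives are semiclassical, and the semiclassical Sobolev embedding loses $h^{-n/2}$. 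Concretely, Gagliardo--Nirenberg gives at best $\|Dw\|_{L^\infty}\lesssim h^{-n/2}\|w\|_{\mcb^m}$. Hence your tame estimate $\|N_h(\cdot,w,Dw)\|_{\mcb^m}\le C_\rho\|w\|_{\mcb^m}^2$ cannot hold with a constant independent of $h$, and the contraction as written does not close. The gap is repairable: since the quadratic remainder carries $h^M$ with $M>m>n/2+1$, the $h$-loss from the embedding is more than compensated and the iteration still contracts for $h$ small --- but this bookkeeping must be made explicit; you cannot appeal to a uniform algebra/Moser property of $\mcb^m$ that does not exist.

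The paper (following Gu\`es) is organized differently and sidesteps this issue. It does \emph{not} divide by $h^M$ or linearize around $v_h$; instead it iterates
\[
\square w_j=f\bigl(x,w_{j-1}+v,D(w_{j-1}+v)\bigr)-f(x,v,Dv)-h^M G
\]
with the bare $\square$ as linear operator, and works in exponentially time-weighted norms $\|\cdot\|_{m,\mu,\lambda}$ built on $e^{-\lambda t}$. The contraction factor comes from the gain $\lambda^{-3/2}$ in the weighted energy estimate \eqref{enerest0} (choose $\lambda$ large depending on $\rho$, then $\mu=1/h$ large), combined with a \emph{quantified} embedding $\|u\|_{W^{1,\infty}}\le C\mu^{-1/4}\mcn_{m,\mu,\lambda}(u)$ (Lemma~\ref{inc1}) and the scaled Gagliardo--Nirenberg/Moser estimates of Propositions~\ref{chainR1}--\ref{ChainR3}. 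In short: the paper buys smallness from the weight $\lambda$, not from $h^M$ alone, and handles the $h$-dependent product estimates through the $\mu$-weighted norms rather than by asserting a uniform algebra property of $\mcb^m$.
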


Before proving the theorem, we recall some techniques from \cite{Gue}. 

\subsection{Spaces of Distributions}  We recall norms of spaces of distributions introduced in \cite{Gue} needed to prove Theorem \ref{approx}. 
\begin{definition} If $u\in H^m(\mrn)$, $m\in \mn$, and $\mu>0$, we define
\beq\label{defmu}
\| u\|_{m,\mu} =\sum_{ |\alpha| \leq m} \mu^{m-|\alpha|}  \|D^\alpha u\|_{{}_{L^2(\mrn)}}.
\eeq
\end{definition}

We will also  need a particular version of the Gagliardo-Nirenberg inequalities, and we refer the reader to \cite{BreMir,Fio} for a proof.
 
 \begin{lemma}\label{Gagnir} If  $1\leq k \leq m\in \mn$, there exists a constant $C=C(k,m)$ such that
\beq\label{Gag1}
\sum_{|\alpha|\leq k} || D^\alpha u||_{L^{\frac{2m}{k}}} \leq C ||u||_{L^\infty}^{1-\frac{k}{m}} \left(\sum_{|\beta|\leq m} ||D^\beta u||_{L^2}\right)^{\frac{k}{m}}, \text{ provided } u \in L^\infty(\mr^{n}) \cap H^m(\mr^{n}).
\eeq
If one rescales $y=\frac{1}{\mu} x^\prime$  in \eqref{Gag1}, we obtain, with the same constant,
\beq\label{Gag2}
\sum_{|\alpha|\leq k} \mu^{k-|\alpha|}  || D^\alpha u||_{L^{\frac{2m}{k}}} \leq C ||u||_{L^\infty}^{1-\frac{k}{m}} \left(\sum_{|\beta|\leq m}\mu^{m-|\alpha|} ||D^\beta u||_{L^2}\right)^{\frac{k}{m}}, \text{ provided } u \in L^\infty(\mr^{n}) \cap H^m(\mr^{n}).
\eeq
\end{lemma}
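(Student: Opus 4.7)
Inequality \eqref{Gag1} is the classical Gagliardo--Nirenberg interpolation inequality; I would prove \eqref{Gag1} first and then derive \eqref{Gag2} by scaling, following the approach of \cite{BreMir}.

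The heart of the argument is the endpoint estimate $\|D^\alpha u\|_{L^{2m/k}}\leq C\|u\|_{L^\infty}^{1-k/m}\|D^m u\|_{L^2}^{k/m}$ for $|\alpha|=k$ and $1\leq k\leq m$. I would establish this by iterated integration by parts. Writing $p=2m/k$ (so $pk=2m$) and $\alpha=e_i+\beta$ with $|\beta|=k-1$, one starts from
\[
\int_{\mr^n}|D^\alpha u|^p\,dx=-\int_{\mr^n} D^\beta u\cdot D_i\bigl((D^\alpha u)\,|D^\alpha u|^{p-2}\bigr)\,dx,
\]
and at each step moves a single derivative from the highest-order factor onto a lower-order one. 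After $k$ such redistributions, H\"older's inequality with one factor in $L^\infty$ (contributing $\|u\|_{L^\infty}^{2(m-k)/k}$) and the top-order factor in $L^2$ (contributing $\|D^m u\|_{L^2}^{2}$) gives the bound once the $p$-th root is taken. Boundary terms vanish by approximating $u\in L^\infty\cap H^m(\mr^n)$ by Schwartz functions. The lower-order terms with $|\alpha|<k$ appearing in \eqref{Gag1} are then absorbed via the interior $L^2$-interpolation $\|D^j u\|_{L^2}\leq C\|u\|_{L^2}^{1-j/m}\|D^m u\|_{L^2}^{j/m}$ combined with H\"older between $L^\infty$ and $L^{2m/k}$; only finitely many indices $\alpha$ occur, so constants absorb cleanly.

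To obtain \eqref{Gag2} from \eqref{Gag1}, I would apply \eqref{Gag1} to the dilate $\tilde u(y):=u(y/\mu)$. Since $D_y^\alpha\tilde u(y)=\mu^{-|\alpha|}(D^\alpha u)(y/\mu)$, one computes $\|D_y^\alpha\tilde u\|_{L^p(\mr^n)}=\mu^{-|\alpha|+n/p}\|D^\alpha u\|_{L^p(\mr^n)}$ and $\|\tilde u\|_{L^\infty}=\|u\|_{L^\infty}$. Substituting these relations into \eqref{Gag1} and multiplying through by $\mu^{k-nk/(2m)}$, the scaling weights $\mu^{nk/(2m)}$ on the left and $(\mu^{n/2})^{k/m}=\mu^{nk/(2m)}$ on the right cancel exactly, leaving the factors $\mu^{k-|\alpha|}$ and $\mu^{m-|\beta|}$ that appear in \eqref{Gag2}.

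The main obstacle is the bookkeeping in the integration-by-parts step: one must ensure no single factor ever carries more than $m$ derivatives, which is what forces the hypothesis $k\leq m$ and dictates the specific exponent $2m/k$. The Brezis--Mironescu argument \cite{BreMir} organizes this through an induction on $k$ that keeps the top-derivative factor in $L^2$ throughout, and that systematic presentation is the one I would transcribe for the details.
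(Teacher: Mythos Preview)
The paper does not actually prove this lemma: it records \eqref{Gag1} as a known instance of the Gagliardo--Nirenberg inequality and simply refers the reader to \cite{BreMir,Fio}, and \eqref{Gag2} is asserted to follow from \eqref{Gag1} by the rescaling $y=x'/\mu$ with no computation shown. Your proposal is therefore strictly more detailed than what the paper offers, and your scaling derivation of \eqref{Gag2} from \eqref{Gag1} is exactly the computation the paper leaves implicit; it is correct as written.

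Your sketch for \eqref{Gag1} follows the references the paper cites, so there is no divergence in approach. One small point worth tightening: your handling of the terms with $|\alpha|<k$ via ``$L^2$-interpolation $\|D^j u\|_{L^2}\leq C\|u\|_{L^2}^{1-j/m}\|D^m u\|_{L^2}^{j/m}$ combined with H\"older between $L^\infty$ and $L^{2m/k}$'' does not quite close as stated, because H\"older between $L^\infty$ and $L^{2}$ applied to $D^\alpha u$ would need $\|D^\alpha u\|_{L^\infty}$, which is not controlled by $\|u\|_{L^\infty}$. A clean fix is to apply your endpoint bound with $k$ replaced by $j=|\alpha|$ (yielding control of $\|D^\alpha u\|_{L^{2m/j}}$) and then interpolate between $L^{2m/j}$ and $L^2$ to reach $L^{2m/k}$; the exponents combine to give exactly $\|u\|_{L^\infty}^{1-k/m}$ times $L^2$ norms of total power $k/m$. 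Since you are in any case deferring the details to \cite{BreMir}, this is a minor bookkeeping point and your plan is sound.
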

 
 As a consequence of Lemma \ref{Gagnir} we have the following:
 \begin{lemma}\label{prodmu}  Let $u_j\in L^\infty(\mrn)\cap H^m(\mrn)$, $1\leq j \leq r$.  If $\alpha_j=(\alpha_{j_1}, \alpha_{j_2}, \ldots, \alpha_{j_n})$, $|\alpha_j|=\alpha_{j_1}+\alpha_{j_1}+\ldots \alpha_{j_n}$ and $\sigma=\sum_{j=1}^r |\alpha_j|  \leq m$, then there exists $C=C(m)$ such that
 \beq\label{Gag3}
 \mu^{m-\sigma} \bigl\|\prod_{j=1}^r D^{\alpha_j} u_j\bigr\|_{{}_{L^2(\mrn)}} \leq C \sum_{j=1}^r\biggl( \| u_j\|_{m,\mu} \prod_{i\not=j} \|u_i\|_{{}_{L^\infty(\mrn)}}\biggr).
 \eeq
 \end{lemma}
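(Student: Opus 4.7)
The natural strategy is to combine H\"older's inequality with the semiclassical Gagliardo--Nirenberg estimate \eqref{Gag2} and a weighted arithmetic--geometric mean. Write $k_j = |\alpha_j|$ and $S = \{j : k_j \geq 1\}$. Factors with $k_j = 0$ can simply be pulled out in $L^\infty$, so the essential task is to bound $\|\prod_{j\in S} D^{\alpha_j}u_j\|_{L^2}$. The edge case $\sigma = 0$ (so $S$ is empty) is trivial: put one factor in $L^2$ and the remaining ones in $L^\infty$, then absorb $\mu^{m}\|u_j\|_{L^2}$ into $\|u_j\|_{m,\mu}$. So from here on I assume $\sigma\geq 1$.

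The first move is H\"older with exponents $p_j = 2\sigma / k_j$ for $j \in S$, chosen so that $\sum_{j\in S} 1/p_j = 1/2$:
\[
\Bigl\|\prod_{j\in S} D^{\alpha_j} u_j\Bigr\|_{L^2(\mrn)} \leq \prod_{j\in S} \|D^{\alpha_j} u_j\|_{L^{2\sigma/k_j}(\mrn)}.
\]
Each factor on the right is in the exact form required by \eqref{Gag2} with $m$ replaced by $\sigma$ and $k = k_j$ (this is legal since $1\leq k_j\leq \sigma\leq m$); retaining only the term with $|\alpha|=k_j$ on the left-hand side of \eqref{Gag2} gives
\[
\|D^{\alpha_j} u_j\|_{L^{2\sigma/k_j}} \leq C \|u_j\|_{L^\infty}^{1-k_j/\sigma}\,\|u_j\|_{\sigma,\mu}^{k_j/\sigma}.
\]
Plugging this back in, setting $\theta_j = k_j/\sigma$ and $a_j = \|u_j\|_{\sigma,\mu}\prod_{i\in S,\,i\neq j}\|u_i\|_{L^\infty}$, one checks that the product over $j\in S$ equals $\prod_{j\in S} a_j^{\theta_j}$. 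Since $\sum_{j\in S}\theta_j = 1$, the weighted AM--GM yields
\[
\prod_{j\in S} a_j^{\theta_j} \leq \sum_{j\in S} \theta_j a_j \leq \sum_{j\in S} \|u_j\|_{\sigma,\mu}\prod_{i\in S,\,i\neq j}\|u_i\|_{L^\infty},
\]
and re-inserting the $L^\infty$ factors from $S^c$ on both sides gives
\[
\Bigl\|\prod_j D^{\alpha_j} u_j\Bigr\|_{L^2}\leq C\sum_j \|u_j\|_{\sigma,\mu}\prod_{i\neq j}\|u_i\|_{L^\infty}.
\]

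The final step is to exchange $\|\cdot\|_{\sigma,\mu}$ for $\|\cdot\|_{m,\mu}$. Since $\sigma\leq m$, for any $\mu>0$ one has
\[
\mu^{m-\sigma}\|u\|_{\sigma,\mu} = \sum_{|\alpha|\leq \sigma} \mu^{m-|\alpha|}\|D^\alpha u\|_{L^2} \leq \|u\|_{m,\mu},
\]
so multiplying the previous bound by $\mu^{m-\sigma}$ produces exactly \eqref{Gag3}. I do not foresee a real obstacle in this argument; the only substantive design choice is to run Gagliardo--Nirenberg at the intermediate order $\sigma$ (rather than $m$), so that the H\"older exponents $p_j = 2\sigma/k_j$ close up exactly to $L^2$. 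The mismatch between the $\sigma$-norm produced and the $m$-norm required by the statement is then absorbed cleanly by the $\mu^{m-\sigma}$ factor at the very end.
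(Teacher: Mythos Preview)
Your proof is correct and follows essentially the same route as the paper's: H\"older with exponents $2\sigma/|\alpha_j|$, Gagliardo--Nirenberg at order $\sigma$, weighted AM--GM, and then the upgrade from $\sigma$ to $m$. The only cosmetic difference is that the paper first proves the case $\mu=1$ via \eqref{Gag1} and obtains the general case by the rescaling $y=x/\mu$, whereas you invoke the pre-scaled inequality \eqref{Gag2} directly and absorb the mismatch by the observation $\mu^{m-\sigma}\|u\|_{\sigma,\mu}\leq \|u\|_{m,\mu}$.
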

 \begin{proof}  If $\|u_j\|_{{}_{L^\infty}}=0$ for some $j$, this is obvious. Let us first assume $\mu=1$ and since
 \[
 \bigl\|\prod_{j=1}^r D^{\alpha_j} u_j\bigr\|_{{}_{L^2(\mrn)}}\leq \prod_{\alpha_j=0} \| u_j\|_{{}_{L^\infty}} \big\|\prod_{\alpha_j\not=0} D^{\alpha_j} u_j\bigr\|_{{}_{L^2(\mrn)}},
 \]
 we may assume $\alpha_j\not=0$ for all $j$.   Since $\sum_{j=1}^r \frac{|\alpha_j|}{2\sigma}=\ha$,  it follows from H\"older's inequality and \eqref{Gag1} that
 
 \[
 \begin{split}
   \bigl\|  \prod_{j=1}^r \bigl(D^{\alpha_j} u_{j}\bigr)\bigr\|_{{}_{L^2}} \leq &  \prod_{j=1}^r \bigl\| \bigl(D^{\alpha_j} u_{j}\bigr)\bigr\|_{L^{\frac{2\sigma }{|\alpha_j|}}}\leq 
C \prod_{j=1}^r \|u_{j}\|_{{}_{L^\infty}}^{1-\frac{|\alpha_j|}{\sigma}}  \|u_{j}\|_{{}_{{}_{H^{\sigma}}}}^{{}^{\frac{|\alpha_j|}{\sigma}}} \leq   \prod_{j=1}^r  C\|u_{j}\|_{{}_{L^\infty}} \biggl(\frac{ \|u_{j}\|_{{}_{{}_{H^{\sigma}}}}}{\|u_j\|_{{}_{L^\infty}}}\biggr)^{{}^{\frac{|\alpha_j|}{\sigma}}}.
 \end{split}
\]
 Now we recall that the convexity of the exponential function implies that
 \[
 \prod_{j=1}^r   \biggl(\frac{ \|u_{j}\|_{{}_{{}_{H^{\sigma}}}}}{\|u_{j}\|_{{}_{L^\infty}}}\biggr)^{{}^{\frac{|\alpha_j|}{\sigma}}} 
 \leq\sum_{j=1}^r \frac{|\alpha_j|}{\sigma} \biggl(\frac{ \|u_{j}\|_{{}_{{}_{H^{\sigma}}}}}{\|u_{j}\|_{{}_{L^\infty}}}\biggr),
 \]
 and since $\sigma\leq m,$ we obtain
 \[
 \bigl\|\prod_{j=1}^r D^{\alpha_j} u_j\bigr\|_{{}_{L^2(\mrn)}} \leq C \sum_{j=1}^r\biggl(\sum_{|\beta|\leq m} \|D^\beta u_j\|_{{}_{L^2}} \prod_{i\not=j} \|u_i\|_{{}_{L^\infty}}\biggr).
 \]
 
 If we rescale $y=\frac{x}{\mu}$ we obtain
 \[
\mu^{-\sigma} \bigl\|\prod_{j=1}^r D^{\alpha_j} u_j\bigr\|_{{}_{L^2(\mrn)}} \leq C \sum_{j=1}^r\biggl(\sum_{|\beta|\leq m} \mu^{-|\beta|} \|D^\beta u_j\|_{{}_{L^2}} \prod_{i\not=j} \|u_i\|_{{}_{L^\infty(\mrn)}}\biggr).
 \]
Multiply this inequality by $\mu^m$ and we obtain \eqref{Gag3}. This ends the proof of the Lemma.
 \end{proof}
 
 We will need the following:
 
 \begin{prop}\label{chainR1}  Let $u_j\in L^\infty(\mrn)\cap H^m(\mrn)$, $1\leq j \leq J$.  If  $F\in C^\infty(\mr^J)$, define 
 \[
 {\mathbf S}= \max \{  \|(D^\alpha F)(u_1, \ldots u_{{}_J})\|_{{}_{L^\infty}}:  1\leq j \leq J, |\alpha| \leq m\}.
 \] 
 If $m$ is a positive integer, there exists a constant $C= C(m,\|u_1\|_{{}_{L^\infty}}, \ldots \|u_J\|_{{}_{L^\infty}},  {\mathbf S})$ such that for all $\mu>0,$ 
 \beq\label{Gag4}
\|F(u_1, u_2, \ldots, u_{{}_J})\|_{m,\mu}\leq C \sum_{j=1}^J  \| u_j\|_{m,\mu}
 \eeq
 \end{prop}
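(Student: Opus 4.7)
The plan is to reduce the estimate to the product estimate \eqref{Gag3} via the Faà di Bruno formula. For any multi-index $\alpha$ with $|\alpha| = \sigma \leq m$, differentiating the composition gives a finite sum
\[
D^\alpha F(u_1,\ldots,u_J) = \sum (D^\beta F)(u_1,\ldots,u_J) \prod_{i=1}^r D^{\alpha_i} u_{j_i},
\]
where the sum runs over finitely many configurations with $1 \leq r \leq \sigma$, $1 \leq j_i \leq J$, $|\alpha_i| \geq 1$, and $\sum_{i=1}^r |\alpha_i| = \sigma$, and $|\beta| \leq \sigma$. This is the step that replaces derivatives of a composition by a combinatorial sum; nothing in it depends on $\mu$.

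Next I would estimate each term in $L^2(\mrn)$ and then weight by $\mu^{m-\sigma}$. The $L^\infty$ factor $\|(D^\beta F)(u_1,\ldots,u_J)\|_{L^\infty}$ is controlled by ${\mathbf S}$, so it can be pulled outside. The remaining product $\prod_{i=1}^r D^{\alpha_i} u_{j_i}$ has total differentiation order exactly $\sigma \leq m$, so Lemma \ref{prodmu} applies and gives
\[
\mu^{m-\sigma} \bigl\|\prod_{i=1}^r D^{\alpha_i} u_{j_i}\bigr\|_{L^2} \leq C(m) \sum_{k=1}^r \Bigl( \|u_{j_k}\|_{m,\mu} \prod_{i \neq k} \|u_{j_i}\|_{L^\infty} \Bigr).
\]
Since each $\|u_{j_i}\|_{L^\infty}$ is a fixed constant absorbed into the final $C$, the right-hand side is bounded by a constant multiple of $\sum_{j=1}^J \|u_j\|_{m,\mu}$.

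Summing over all the (finitely many) terms in Faà di Bruno and over all $|\alpha| \leq m$, and using the definition \eqref{defmu} of $\|\cdot\|_{m,\mu}$, I get
\[
\|F(u_1,\ldots,u_J)\|_{m,\mu} = \sum_{|\alpha| \leq m} \mu^{m-|\alpha|} \|D^\alpha F(u_1,\ldots,u_J)\|_{L^2} \leq C \sum_{j=1}^J \|u_j\|_{m,\mu},
\]
where $C$ depends only on $m$, the $\|u_j\|_{L^\infty}$, and ${\mathbf S}$. The $\alpha = 0$ term $\|F(u_1,\ldots,u_J)\|_{L^2}$ is treated separately: writing $F(u) = F(0) + \int_0^1 \nabla F(tu)\cdot u\, dt$ (or just absorbing it into the $F(0)$ constant after noting we can replace $F$ by $F-F(0)$ since the statement is about $\|\cdot\|_{m,\mu}$ on Schwartz-type arguments) bounds it by ${\mathbf S}\sum_j\|u_j\|_{L^2} \leq {\mathbf S}\mu^{-m}\sum_j \|u_j\|_{m,\mu}$; multiplying by $\mu^m$ gives the needed control.

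The only real obstacle is bookkeeping: making sure the Faà di Bruno sum has only finitely many terms (which follows from $|\alpha| \leq m$ being bounded), that the bound from \eqref{Gag3} is applied correctly when some $\alpha_i$ coincide or some $j_i$ are repeated (the estimate still applies verbatim), and carefully packaging the constants so the final $C$ depends only on $m$, $\|u_j\|_{L^\infty}$, and ${\mathbf S}$ as claimed.
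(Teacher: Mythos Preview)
Your argument is correct and follows exactly the paper's approach: apply the multivariate Fa\`a di Bruno formula to expand $D^\alpha F(u_1,\ldots,u_J)$, bound the coefficient $(D^\beta F)(u_1,\ldots,u_J)$ by ${\mathbf S}$, and then invoke Lemma \ref{prodmu} (i.e.\ \eqref{Gag3}) on the resulting products of derivatives. Your separate treatment of the $\alpha=0$ term (via $F(u)=F(0)+\int_0^1\nabla F(tu)\cdot u\,dt$) is a detail the paper does not spell out explicitly but is needed for the full norm $\|\cdot\|_{m,\mu}$.
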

 \begin{proof}
 The proof relies on a particular version of the multivariate Faa di Bruno formula  whose proof  will be left to the reader.  See \cite{ConSav} for the proof of the  complete formula, where the coefficients are computed.
\begin{lemma}\label{Faa}  Let $F \in C^\infty(\mr^J, \mr)$ and $ u_j \in C^\infty(\mr^{n}, \mr)$, $j=1,2,\ldots J$.  Let
\[
\begin{split}
& \overline P(J)= \{ \overline p=(p_1, p_2, \ldots, p_J):  \;\  p_j \in \mn, \;\ 1\leq p_j \leq m\}, \\
& \text{ and } \beta(j) =(\beta_1(j), \ldots \beta_J(j)) \text{ be a multi-index. }
\end{split}
\]
If  $\alpha=(\alpha_1, \alpha_2, \ldots, \alpha_{n})$,  there exist $C^\infty$ functions 
$G_{{}_{j,\beta(1), \ldots \beta(m),\ovp}} \in C^\infty(\mr^m,\mr)$, which are derivatives of  $F$ or order less than or equal to $|\alpha|$, such that
\beq\label{FDB}
\begin{split}
& D_{x^\prime}^\alpha F(u_1(x^\prime), u_2(x^\prime), \ldots, u_{j-1},u_J(x^\prime))= \\
& \sum_{\ovp \in P(J)} \sum_{j=1}^{|\alpha|} \sum_{\sum_{i=1}^j \beta(i)= \alpha} G_{{}_{j,\beta(1), \ldots \beta(J),\ovp}}(u_1(x^\prime), \ldots, u_{J}(x^\prime))  \prod_{i=1}^j \bigl(D_{x^\prime}^{\beta(i)} u_{p_i}\bigr).
\end{split}
\eeq
\end{lemma}

Since $\sum_{i=1}^j \beta(i)=\alpha$,  it follows from \eqref{Gag3} that there exists a constant $K=K(\|u_1\|_{{}_{L^\infty}}, \ldots, \|u_J\|_{{}_{L^\infty}})$ such that
\[
\mu^{m-|\alpha|} \biggl\|  \prod_{i=1}^j \bigl(D_{x^\prime}^{\beta(i)} u_{p_i}\bigr)\biggr\|_{{}_{L^2}} \leq  K \sum_{j=1}^J \| u_j\|_{m,\mu}.
\]
This ends the proof of the Proposition. 
\end{proof}
 
 We will also need the following more refined version of Proposition \ref{chainR1}
 \begin{lemma}\label{ChainR2}  Suppose $u_j  \in L^\infty(\mrn)\cap H^m(\mrn)$, $1\leq j \leq J$, and let $v_\mu(x')$  be a one-parameter family of functions defined for $\mu >1$ such that
\beq\label{estvmu}
\begin{split}
&  \|  v_\mu\|_{{}_{L^\infty}} \leq C_0, \;\  \| D_{x^\prime}^\alpha v_\mu\|_{{}_{L^\infty}}  \leq C_\alpha \mu^{|\alpha|-1}, \text{ if } |\alpha|\geq 1,  \text{ and let } {\mathbf C}= \max \{ C_0, C_\alpha, |\alpha|\leq m\}.
\end{split}
\eeq
   If $F\in C^\infty(\mr^{J+n+1})$, define 
 \[
 {\mathbf S}= \max \{ \bigl\|(D^\alpha F)(u_1, \ldots,  u_{J-1}, v_\mu, \nabla_{x'} v_\mu)\bigr\|_{{}_{L^\infty}}: |\alpha| \leq m\}.
 \]

 If $m$ is a positive integer, there exists a constant 
 
 \[
 K= C(m, \|u_j\|_{{}_{L^\infty}}, \ldots , \|u_J\|_{{}_{L^\infty}}, {\mathbf C} , {\mathbf S}), \;\ |\alpha|\leq m,
 \]
  such that for all $\mu>0,$ 
 \beq\label{Gag4N}
\|F(u_1,\ldots, u_J, v_\mu, \nabla_{x^\prime} v_\mu)\|_{m,\mu}\leq K \sum_{j=1}^{J} \| u_j\|_{m,\mu}.
\eeq
 \end{lemma}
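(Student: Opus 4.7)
The plan is to adapt the strategy of Proposition \ref{chainR1}, treating $v_\mu$ and its first-order partials $\partial_{x_k}v_\mu$ as further arguments of $F$ and carefully tracking the $\mu$-weights contributed by \eqref{estvmu}.

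First, apply the multivariate Faà di Bruno formula (Lemma \ref{Faa}) to
\[
D_{x'}^\alpha F\bigl(u_1(x'),\ldots,u_J(x'),v_\mu(x'),\partial_{x_1}v_\mu(x'),\ldots,\partial_{x_n}v_\mu(x')\bigr),
\]
yielding a finite sum of terms of the form $G_{j,\beta,\overline p}(\cdots)\prod_{i=1}^j D^{\beta(i)}w_{p_i}$ with $\sum_i\beta(i)=\alpha$, where $w_{p_i}$ ranges over the augmented argument list $\{u_1,\ldots,u_J,v_\mu,\partial_{x_1}v_\mu,\ldots,\partial_{x_n}v_\mu\}$ and $G_{\cdots}$ is a derivative of $F$ of order $\leq|\alpha|$, hence controlled in $L^\infty$ by $\mathbf S$.

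Next, partition the indices $i=1,\ldots,j$ in each term into $J_u$ (those with $w_{p_i}$ a $u$-factor) and $J_v$ (those with $w_{p_i}$ equal to $v_\mu$ or some $\partial_{x_k}v_\mu$). Set $\sigma_u=\sum_{i\in J_u}|\beta(i)|$ and $\sigma_v=\sum_{i\in J_v}|\beta(i)|$, so that $\sigma_u+\sigma_v=|\alpha|\leq m$. Assumption \eqref{estvmu}, together with $\mu\geq 1$, gives the uniform bound $\|D^\gamma v_\mu\|_{L^\infty}+\|D^\gamma\partial_{x_k}v_\mu\|_{L^\infty}\leq C\,\mathbf{C}\,\mu^{|\gamma|}$ for every $|\gamma|\geq 0$; in particular the product over $J_v$ is bounded in $L^\infty$ by $C\mu^{\sigma_v}$. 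Applying Lemma \ref{prodmu} to the $u$-factors with $\sigma=\sigma_u\leq m$ yields
\[
\mu^{m-\sigma_u}\Bigl\|\prod_{i\in J_u}D^{\beta(i)}u_{p_i}\Bigr\|_{L^2(\mrn)}\leq C\sum_{k=1}^J\|u_k\|_{m,\mu}\prod_{\ell\neq k}\|u_\ell\|_{L^\infty}.
\]
Combining the two estimates via Hölder's inequality $\|fg\|_{L^2}\leq\|f\|_{L^\infty}\|g\|_{L^2}$ and multiplying by the weight $\mu^{m-|\alpha|}$, the total $\mu$-exponent becomes $(m-|\alpha|)+\sigma_v-(m-\sigma_u)=\sigma_u+\sigma_v-|\alpha|=0$, so the resulting bound is independent of $\mu$. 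Summing over all Faà di Bruno terms and over $|\alpha|\leq m$ then produces \eqref{Gag4N} with a constant $K$ depending only on $m$, $\mathbf{S}$, $\mathbf{C}$, and $\|u_j\|_{L^\infty}$.

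The principal obstacle is the exact cancellation of $\mu$-powers. This works precisely because \eqref{estvmu} builds in one saved power of $\mu$ per $v$-derivative (growth like $\mu^{|\gamma|-1}$ rather than the naive $\mu^{|\gamma|}$), which matches the deficit $m-\sigma_u$ produced by applying Lemma \ref{prodmu} with only $\sigma_u$ derivatives on the $u$-factors rather than the full $|\alpha|$. A subordinate technical wrinkle is the case $J_u=\emptyset$ (all differentiations landing on $v$-factors, including the $|\alpha|=0$ contribution $F$ itself), where no $u$-factor is available to feed into Lemma \ref{prodmu}; this contribution is absorbed into the constant $K$ using the $L^\infty$ control on $F$ and its derivatives supplied by $\mathbf{S}$, handled exactly as the analogous terms in Proposition \ref{chainR1} and consistent with the compact-support setting in which the lemma will be applied.
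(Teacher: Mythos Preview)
Your proof is correct and follows essentially the same route as the paper's: apply Fa\`a di Bruno with the augmented argument list $(u_1,\ldots,u_J,v_\mu,\nabla_{x'}v_\mu)$, pull the $v$-factors out in $L^\infty$ using \eqref{estvmu} so that each carries exactly the weight $\mu^{|\beta(i)|}$, and apply Lemma~\ref{prodmu} to the remaining $u$-product with $\sigma=\sigma_u$; the paper distributes the $\mu$-powers directly into the factors rather than tallying the exponent at the end, but the bookkeeping is identical. Your remark about the $J_u=\emptyset$ case is no looser than the paper's own treatment of the corresponding $|\alpha|=0$ term in Proposition~\ref{chainR1}.
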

\begin{proof}  We use \eqref{FDB} and denote $w=(w_1, \ldots, w_{J+n+1})=(u_1, \ldots, u_J, v_\mu,\nabla v_\mu)$, and therefore
\[
\prod_{i=1}^j D_{x^\prime}^{\beta(i)} w_{p_i}= \biggl(\prod_{ p_i\leq J } D_{x^\prime}^{\beta(i)} u_{p_i}
 \biggr) \biggl(D_{x^\prime}^{\beta(J+1)} v_{\mu} \biggr)\biggl( \prod_{p_i\geq J+2 } D_{x^\prime}^{\beta(i)} \p_{x^\prime_{p_i}}v_{\mu}\biggr).
 \]
 Say that $\sum_{r=1}^J \beta(i)=r_1$ and $\sum_{j= J+2}^{n+J+1} \beta(i)= r_2$, and so
 $|\alpha|=r_1+r_2+\beta(J+1)$, and therefore

 \[
\mu^{m-|\alpha|} \prod_{i=1}^j D_{x^\prime}^{\beta(i)} w_{p_i}=
\mu^{m-r_1}  \biggl(\prod_{ p_j\leq J }^j D_{x^\prime}^{\beta(i)} u_{p_i}
 \biggr) \biggl( \mu^{-\beta(J+1)} D_{x^\prime}^{\beta(J+1)} v_{\mu}\biggr) \biggl( \prod_{p_i\geq J+1 }^j \mu^{-\beta(i)} D_{x}^{\beta(i)} \p_{x_{p_i}}v_{\mu}\biggr),
 \]
  it follows from \eqref{estvmu} that
  \[
  \begin{split}
& \biggl\|\mu^{m-|\alpha|} \prod_{i=1}^j D_{x^\prime}^{\beta(i)} w_{p_i}\biggr\|_{{}_{L^2}} 
\leq  \mathbf{C}^{n+1} \mu^{m-r_1}\biggl\|\prod_{ p_j\leq J }^j D_{x^\prime}^{\beta(i)} u_{p_i}
 \biggr\|_{{}_{L^2}} 
 \end{split}
 \] 
 
 and therefore the result follows from \eqref{Gag3}.
 \end{proof}
We use the Lemma to prove 
\begin{prop}\label{ChainR3}  Let $u,v,w, D_{x^\prime} u, D_x w, \in L^{\infty}\bigl([0,T]_{x_0}, H^m(\mrn_{x^\prime})\bigr)\cap L^\infty(\mrn)$, where $m$ is a nonnegative integer and $\mu>1$.  Furthermore, let $v_\mu$ be a one parameter family of functions with $\mu>1$ such that  $v_\mu, \nabla v_\mu \in L^{\infty}\bigl([0,T]_{x_0}, H^m(\mrn_{x^\prime})\bigr)\cap L^\infty(\mrn)$ and 
\beq\label{estvmu1}
\begin{split}
 \sup_{x_0\in [0,T]} \| v_\mu\|_{{}_{L^\infty(\mrn)}}& \leq C_0,  \;\ \sup_{x_0\in [0,T]} \| D_{x'}^\alpha v_\mu \|_{{}_{L^\infty(\mrn)}} \leq C_\alpha \mu^{|\alpha|-1} \text{ and }  \\
& \sup_{x_0\in [0,T]}  \| D_{x'}^\alpha D_{x_0}v_\mu \|_{{}_{L^\infty(\mrn)}} \leq 
C_\alpha \mu^{|\alpha|}.
\end{split}
\eeq
Let ${\mathbf C}=\max\{ C_0, \;\ C_\alpha, \;\ |\alpha|\leq m\}$.  If  $F(x_0,x',r,\ga) \in C^\infty$, $x^\prime\in \mrn,$  $x_0$, $r\in \mr$ and $\ga \in \mr^{n+1}$, be compactly supported in $\{|x^\prime|\leq R\}$, let

  Let
  \[
 {\mathbf S}= \max \{ \bigl\|(D_{x'}^\alpha F)\bigl(x, u, v_\mu, w, Du, Dv_\mu,D w\bigr)\bigr\|_{{}_{L^\infty}}:   |\alpha| \leq m\}, 
 \] 
 If $m$ is a positive integer, there exists a constant 
 
 \beq\label{CM}
 K= K(m, \|u\|_{{}_{L^\infty}}, \|\nabla u\|_{{}_{L^\infty}},  \|w\|_{{}_{L^\infty}}, \|\nabla w\|_{{}_{L^\infty}}, {\mathbf C}, {\mathbf S}),
 \eeq
  such that for all $\mu>0,$  
\beq\label{Gag5}
\begin{split}
& \|F\bigl(x,u+v_\mu, D(u+v_\mu)\bigr)- F\bigl( x,w+v_\mu,D(w+v_\mu)\bigr)\|_{m,\mu} \leq \\
&K \biggl( \| u\|_{m+1,\mu}+ \| w\|_{m+1,\mu}+
\| D_t u\|_{m,\mu}+ \| D_tw\|_{m,\mu}  \biggr)\bigr( \|u-w\|_{{}_{L^\infty}}+ \|D(u-w)\|_{{}_{L^\infty}}\bigr)+
 \\
& K \bigl( \| u-w\|_{m+1,\mu} +  \| D_{x_0}(u-w)\|_{m,\mu}\bigr).
\end{split}
\eeq
\end{prop}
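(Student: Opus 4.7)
\medskip
\noindent\textbf{Proof proposal.}
The plan is to reduce Proposition \ref{ChainR3} to the chain-rule estimate of Lemma \ref{ChainR2} plus a Moser-type product inequality in the $\|\cdot\|_{m,\mu}$ norm. As a first step I would write the difference as an integral along the segment joining the two arguments. Setting $z_\tau=w+\tau(u-w)+v_\mu$ for $\tau\in[0,1]$, the fundamental theorem of calculus gives
\[
F\bigl(x,u+v_\mu,D(u+v_\mu)\bigr)-F\bigl(x,w+v_\mu,D(w+v_\mu)\bigr)
=\int_0^1 \Bigl[(\partial_r F)_\tau\,(u-w)+\sum_{j=0}^{n}(\partial_{\gamma_j}F)_\tau\,\partial_{x_j}(u-w)\Bigr]\,d\tau,
\]
where $(\partial_r F)_\tau$ and $(\partial_{\gamma_j} F)_\tau$ denote the indicated partial derivatives of $F$ evaluated at $(x,z_\tau,Dz_\tau)$. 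After this reduction it suffices to bound each term $H_\tau\cdot E$, where $H_\tau$ is a first-order derivative of $F$ evaluated along the segment and $E$ is either $u-w$ or a component of $D(u-w)$, and then integrate in $\tau$.

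The second ingredient is a Moser product estimate in the scaled norm,
\[
\|fg\|_{m,\mu}\;\leq\; C_m\bigl(\|f\|_{L^\infty}\|g\|_{m,\mu}+\|g\|_{L^\infty}\|f\|_{m,\mu}\bigr),
\]
which I would deduce directly from Lemma \ref{prodmu} by applying it to each Leibniz term $(D^\beta f)(D^{\alpha-\beta}g)$ with $r=2$. Applied to $f=H_\tau$ and $g=E$ this produces two kinds of contributions: $\|H_\tau\|_{L^\infty}\|E\|_{m,\mu}$ and $\|H_\tau\|_{m,\mu}\|E\|_{L^\infty}$. The $L^\infty$ norms of the $H_\tau$ are uniformly controlled by $\mathbf{S}$ along the segment provided one enlarges $\mathbf{S}$ by the analogous sup-norm of $\partial_r F$, $\partial_{\gamma_j}F$ at the intermediate states; since these are $C^\infty$ and $u,w,v_\mu,Du,Dw,Dv_\mu$ all lie in a fixed $L^\infty$ ball, this enlargement is harmless and gets absorbed in the constant \eqref{CM}. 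The $L^\infty$ norms of $E$ give precisely the factor $\|u-w\|_{L^\infty}+\|D(u-w)\|_{L^\infty}$ in the first line of \eqref{Gag5}, while $\|E\|_{m,\mu}$ gives $\|u-w\|_{m+1,\mu}+\|D_{x_0}(u-w)\|_{m,\mu}$ (after observing that $D$ includes the time derivative and that spatial derivatives of $u-w$ up to order $m+1$ are exactly what $\|u-w\|_{m+1,\mu}$ controls).

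The third step is to estimate $\|H_\tau\|_{m,\mu}$ uniformly in $\tau$. Here I would apply Lemma \ref{ChainR2}, viewing $H_\tau=(\partial F)(x,z_\tau,Dz_\tau)$ as a smooth function composed with the list of functions $u,w,v_\mu$ and their first derivatives. The hypothesis \eqref{estvmu1} on $v_\mu$ matches \eqref{estvmu} exactly (after noting that the bound on $D_{x_0}v_\mu$ is not needed at this stage since Lemma \ref{ChainR2} only sees the spatial variables), and by convexity the intermediate function $w+\tau(u-w)$ and its derivatives satisfy the same $L^\infty$ bounds as $u$ and $w$. Lemma \ref{ChainR2} therefore yields
\[
\|H_\tau\|_{m,\mu}\leq K\bigl(\|u\|_{m+1,\mu}+\|w\|_{m+1,\mu}+\|D_{x_0}u\|_{m,\mu}+\|D_{x_0}w\|_{m,\mu}\bigr),
\]
uniformly in $\tau\in[0,1]$, with $K$ depending only on the quantities listed in \eqref{CM}. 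The extra index $m+1$ (instead of $m$) appears because the arguments of $H_\tau$ already contain the first derivatives $Du,Dw$, so a spatial derivative of $H_\tau$ hits a second spatial derivative of $u$ or $w$. Plugging this bound together with the Moser product estimate into the integral representation and integrating in $\tau$ produces exactly \eqref{Gag5}.

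The main obstacle is essentially bookkeeping: keeping track of where the extra derivative that raises $m$ to $m+1$ comes from, and verifying that the constants produced by Lemmas \ref{prodmu} and \ref{ChainR2} depend only on the $L^\infty$ norms of $u$, $Du$, $w$, $Dw$, on $\mathbf{C}$ and on $\mathbf{S}$ uniformly in the segment parameter $\tau$. Once these are checked the assembly is routine.
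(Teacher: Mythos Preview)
Your approach is essentially identical to the paper's: both write the difference as an integral along the segment, apply a Moser product estimate derived from Lemma~\ref{prodmu} to each term $H_\tau\cdot E$, and then invoke Lemma~\ref{ChainR2} to control $\|H_\tau\|_{m,\mu}$ (the paper integrates in $\tau$ first to form $F_0,F_j$ and then estimates, but this is cosmetic).

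One small correction: your parenthetical remark that the bound on $D_{x_0}v_\mu$ in \eqref{estvmu1} is not needed is inaccurate. Since $Dz_\tau$ includes $D_{x_0}v_\mu$ as an argument of $H_\tau$, the spatial derivatives $D_{x'}^\alpha D_{x_0}v_\mu$ do arise when you compute $\|H_\tau\|_{m,\mu}$; the third line of \eqref{estvmu1} is precisely what controls them, showing that $D_{x_0}v_\mu$ scales exactly like $\nabla_{x'}v_\mu$ and may therefore be grouped with the $\nabla_{x'}v_\mu$-type slot in Lemma~\ref{ChainR2}. With that adjustment the argument goes through as you describe.
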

\begin{proof}  We write the difference as
\[
\begin{split}
 & F\bigl(x,u+v_\mu, D(u+v_\mu)\bigr)-  F\bigl(x,w+v_\mu,D(w+v_\mu)\bigr)=\\
 & \int_0^1 \frac{d}{dt} F(x, t(u+v_\mu)+(1-t)(w+v_\mu), t D(u+ v_\mu)+(1-t)D(w+v_\mu)) \ dt= \\
 & F_0\bigl(x, u, v_\mu, w, Du, Dv_\mu,D w\bigr) (u-w) + \sum_{j=1}^n F_j\bigl(x, u,v_\mu,w,Du,Dv_\mu,Dw\bigr) D_j(u-w),  \text{ where } \\
 & F_0\bigl(x, u, v_\mu, w, Du, Dv_\mu,D w\bigr) = \\
 & \int_0^1 (\p_r F)(x, t(u+v_\mu)+(1-t)(w+v_\mu), t D(u+ v_\mu)+(1-t)D(w+v_\mu)) \ dt \text{ and } \\
& F_j\bigl(x, u, v_\mu, w, Du, Dv_\mu,D w\bigr) = \\
& \int_0^1 (\p_{\ga_j} F)(x, t(u+v_\mu)+(1-t)(w+v_\mu), t D(u+ v_\mu)+(1-t)D(w+v_\mu)) \ dt.
 \end{split}
 \]
The product rule gives
\[
\begin{split}
& D^\alpha \biggl(  F_0(x, u,v_\mu,w,Du, Dv_\mu, Dw) (u-w)\biggr)= \\
& \sum_{\beta\leq \alpha } C_{\alpha,\beta}  D^\beta \biggl(F_0(x,u,v,w,Du,Dv,Dw)\biggr) \biggl(D^{\alpha-\beta} (u-w)\biggr),
\end{split}
\]
and it follows from \eqref{Gag3} that
\[
\begin{split}
& \mu^{m-|\alpha|} \biggl\| D^\alpha \biggl(  F_0(x, u, v_\mu,w, Du, Dv_\mu, Dw) (u-w)\biggr)\biggr\|_{L^2} \leq  \\
&  C \bigl\| F_0(x, u,v_\mu,w, Du, Dv_\mu , Dw) \bigr\|_{m,\mu}  \| u-w\|_{{}_{L^\infty}} + \\
&  C \bigl\| F_0(x, u, v_\mu,w, Du, D v_\mu,Dw) \bigr\|_{{}_{L^\infty}} \| u-w\|_{m,\mu}.
\end{split}
\]
But Lemma \ref{ChainR2} implies  that there exists a constant as in \eqref{CM} such that for $j=0,\ldots n,$
\beq\label{esfFj}
\begin{split}
& \bigl\| F_j(x, u,_\mu,w,Du,Dv_\mu,Dw) (u-w))\bigr\|_{m,\mu} \leq  \\
& C\bigl(\| u\|_{m+1,\mu}+\| D_t u\|_{m,\mu}+ \| w\|_{m+1,\mu}+  \| D_tw\|_{m+1,\mu}\bigr) \| u-w\|_{{}_{L^\infty}} + C \|u-w\|_{m,\mu}.
\end{split}
\eeq
Similarly, it follows from \eqref{Gag4} that
\[
\begin{split}
& \mu^{m-|\alpha|} \biggl\| D^\alpha \biggl(  F_j(x, u, v_\mu,w, Du, Dv_\mu, Dw) D_j(u-w)\biggr)\biggr\|_{L^2} \leq  \\
&  C \bigl\| F_j(x, u,v_\mu,w, Du, Dv_\mu , Dw) \bigr\|_{m,\mu}  \| D_j(u-w)\|_{{}_{L^\infty}} + \\
&  C \bigl\| F_j(x, u, v_\mu,w, Du, D v_\mu,Dw) \bigr\|_{{}_{L^\infty}} \| D_j(u-w)\|_{m,\mu}.
\end{split}
\]
and the estimate for these terms follows from \eqref{Gag4}.

\end{proof} 
\subsection{Energy Estimates} Next we prove weighted energy estimates needed in the proof of Theorem \ref{approx} and to agree with standard notation, we will use $x_0=t$ and $x=(t,x^{\prime}).$  As in \cite{Gue}, we equip the space $L^2([0,T]; H^m(\mrn))$, $m$ a non-negative integer, with the norm
\[
\| u\|_{m, \mu,\la}= \biggl[ \int_0^T \| u\|_{m,\mu}^2 e^{-2\la t} \ dt \biggr]^\ha, \;\ \;\ \mu>0, \;\ \la>0,
\]
where $ \| u\|_{m,\mu}$ was defined in \eqref{defmu}. 
\begin{prop} Let $u(t,x^{\prime})\in C_0^\infty(\mr^{n+1})$ and $m\in \mr_+$.  There exists a constant $C$ independent of $\la>0$, $t>0$, $\mu>0$  and $u$, such that
\beq\label{enerest0}
\begin{split}
&  e^{-\la T} \biggl(  \|u(T,\bullet)\|_{m+1,\mu}+\mu^{-1} \|\p_t u(T,\bullet)\|_{m,\mu} \biggr) + \sqrt{\la} \biggl( \|u\|_{m+1,\mu,\la}+ \frac{1}{\mu} \|\p_t u\|_{m,\mu,\la} \biggr) \leq  \\
& C\biggl( \|u(0,\bullet)\|_{m+1,\mu}+ \mu^{-1} \|\p_t u(0,\bullet)\|_{m+1,\mu}+ \la\mu^{-1} \|u(0,\bullet)\|_{m,\mu}+ \mu^{-1} \la^{-\ha} \|\square u\|_{m,\mu,\la}\biggr).
\end{split}
\eeq
\end{prop}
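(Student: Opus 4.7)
The strategy is the classical energy method combined with two weights. I start from the multiplier identity obtained by pairing $\square u$ with $\partial_t u$ in $L^2(\mathbb{R}^n)$ and integrating by parts in $x'$; all spatial boundary terms vanish on $\mathbb{R}^n$, producing
\[
\tfrac{d}{dt}\tfrac{1}{2}\bigl(\|\partial_t u(t)\|_{L^2}^2 + \|\nabla_{x'} u(t)\|_{L^2}^2\bigr) = -\int_{\mathbb{R}^n}(\square u)(\partial_t u)\,dx'.
\]
To manufacture the $\sqrt{\lambda}$ gain I multiply this identity by $e^{-2\lambda t}$ and integrate over $[0,T]$; the derivative hitting the weight produces a positive bulk term $2\lambda\int_0^T e^{-2\lambda t}(\|\partial_t u\|^2 + \|\nabla u\|^2)\,dt$ on the left after rearrangement, and Young's inequality applied to the source integral absorbs half of this bulk at the cost of $\lambda^{-1}\|\square u\|_{L^2_\lambda}^2$ on the right. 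Taking square roots yields the $m=0$ version of \eqref{enerest0}, with the terminal-energy and $\sqrt{\lambda}$-bulk terms simultaneously on the left.

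To pass from $m=0$ to general $m$ I use that $\square$ has constant coefficients, so $\square D_{x'}^\alpha = D_{x'}^\alpha\square$ for every spatial multi-index $\alpha$. Applying the base estimate to $D_{x'}^\alpha u$ for $|\alpha|\leq m$, weighting each by $\mu^{2(m-|\alpha|)}$, summing, and taking square roots gives an estimate whose terms reorganize into the norms $\|\cdot\|_{m+1,\mu}$ and $\|\partial_t\cdot\|_{m,\mu}$ of \eqref{enerest0}. The $\mu^{-1}$ factor in front of $\|\partial_t u\|_{m,\mu}$ on the left is a permissible weakening of what the energy actually controls, while the corresponding asymmetry on the right ($\mu^{-1}\|\partial_t u(0)\|_{m+1,\mu}$) arises because at derivative level $|\alpha|=m$ the initial-data contribution contains $\partial_t D_{x'}^m u(0)$, which sits one derivative above $\partial_t u(0)\in H^m$.

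The only norm missed by this procedure is the zeroth-order piece $\mu^{m+1}\|u(T)\|_{L^2}$ inside $\|u(T)\|_{m+1,\mu}$, since the energy identity controls only derivatives of $u$. For this I invoke the fundamental theorem of calculus $u(T)=u(0)+\int_0^T\partial_t u(s)\,ds$, pair it with Cauchy--Schwarz in $s$ against the weight $e^{-\lambda(T-s)}$, and use $\int_0^T e^{-2\lambda(T-s)}\,ds \leq (2\lambda)^{-1}$ to obtain
\[
e^{-\lambda T}\|u(T)\|_{L^2} \leq e^{-\lambda T}\|u(0)\|_{L^2} + \lambda^{-1/2}\|\partial_t u\|_{L^2_\lambda}.
\]
Applying the same trick to $D_{x'}^\alpha u$ for $|\alpha|\leq m$ with weight $\mu^{m-|\alpha|}$ produces, after combining with the already-proven $\sqrt{\lambda}$-control of $\|\partial_t u\|_{m,\mu,\lambda}$, the $\lambda\mu^{-1}\|u(0)\|_{m,\mu}$ term on the right of \eqref{enerest0}.

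The principal technical care lies in keeping the constant $C$ uniform in both $\mu$ and $\lambda$: source terms must be absorbed exclusively into the $\sqrt{\lambda}$-bulk rather than the terminal boundary energy, and low-order norms must be recovered through the fundamental theorem of calculus rather than Sobolev embedding, which would introduce dimensional constants incompatible with the $\mu$-rescaling implicit in $\|\cdot\|_{m,\mu}$. Keeping the bookkeeping of the three parameters $(\mu, \lambda, |\alpha|)$ straight is where the real work lies; the analytic content itself is standard.
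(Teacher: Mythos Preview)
Your argument has a genuine gap in the $\mu$--bookkeeping, and it is precisely the step you flag as ``where the real work lies.''

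Applying the $m=0$ energy identity (with multiplier $\partial_t u$) to $D_{x'}^\alpha u$ and weighting by $\mu^{m-|\alpha|}$ produces, on the right, the source term $\lambda^{-1/2}\|\square u\|_{m,\mu,\lambda}$ --- not the $\mu^{-1}\lambda^{-1/2}\|\square u\|_{m,\mu,\lambda}$ required by \eqref{enerest0}. On the left, the procedure controls $\|u(T)\|_{m+1,\mu}$ minus its zeroth-order piece $\mu^{m+1}\|u(T)\|_{L^2}$. Your FTC repair for that piece gives
\[
\mu^{m+1}e^{-\lambda T}\|u(T)\|_{L^2}\le \mu^{m+1}e^{-\lambda T}\|u(0)\|_{L^2}+\mu^{m+1}\lambda^{-1/2}\|\partial_t u\|_{L^2_\lambda},
\]
but the bulk term you have already established controls only $\sqrt{\lambda}\,\mu^{m}\|\partial_t u\|_{L^2_\lambda}$, so closing the estimate costs a factor $\mu^2/\lambda$. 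Since in the application $\lambda$ is fixed and $\mu\to\infty$, this is fatal; the extra $\mu^{-1}$ on the source is exactly what drives the contraction in Lemma~\ref{contr}.

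The paper avoids this in two coupled moves. First, it substitutes $u=e^{\lambda t}v$ and uses the multiplier $\partial_t v+\lambda v$, which builds the zeroth-order piece $\lambda^2\|u\|_{L^2}^2$ directly into the energy $E_{u,0,\lambda}$ (no FTC needed). Second --- and this is the step your approach has no analogue for --- it obtains the $\mu$-weighted estimate \eqref{enerest0} not by summing over $|\alpha|\le m$ with weights $\mu^{m-|\alpha|}$, but by the hyperbolic rescaling $(\tau,y)=(\mu t,\mu x')$ and replacing $\lambda$ by $\lambda/\mu$. Under this rescaling $\square_{t,x'}=\mu^{2}\square_{\tau,y}$, so the source acquires a factor $\mu^{-2}$; after unwinding the change of variables this is what yields $\mu^{-1}\lambda^{-1/2}\|\square u\|_{m,\mu,\lambda}$ on the right and simultaneously turns $\lambda\|u\|_{L^2}$ into the correct lowest-order term of $\|u\|_{m+1,\mu}$. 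Direct weighting cannot see this because it never touches the time variable, while the $\mu$-gain in \eqref{enerest0} comes from scaling time and space together.
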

\begin{proof} We begin the proof with the following a priori  energy estimate

\begin{lemma} Let $u(t,x^{\prime})\in C_0^\infty(\mr^{n+1})$ and $m\in \mr_+$.  For $\la>0$ let
\[
E_{u,m,\la}(t)= \|\p_t u(t,\bullet)\|_{{}_{H^m(\mrn)}}+ \|u(t,\bullet)\|_{{}_{H^{m+1}(\mrn)}}+ \la \|u(t,\bullet)\|_{{}_{H^m(\mrn)}}.
\]
There exists a constant $C>0$ independent of $\la$, $t>0$,  and $u$, such that
\beq\label{enerest1}
 e^{-\la t} E_{u,m,\la}(t) +\sqrt{\la} \biggl[ \int_0^t e^{-2\la s} E_{u,m,\la}^2(s) \ ds\biggr]^\ha \leq  C \biggl( E_{u,m,\la}(0) +\frac{1}{\sqrt{\la}} \biggl[ \int_0^t e^{-2\la s} \| \square u(s, \bullet)\|_{{}_{H^m(\mrn)}}^2 \ ds\biggr]^\ha\biggr).
\eeq
\end{lemma}
\begin{proof}  Since $\square=\p_t^2-\Delta_{x'}$ commutes with derivatives in $x^{\prime},$ we just need to prove this for $m=0$.  If $\square u= F(t,x^{\prime})$,  and  $u= e^{\la t} v$, 
then $v$ satisfies
\[
\bigl( \square v+ 2\la \p_t v+ \la^2 v) =e^{-\la t} F(t,x^{\prime}).
\]
If we multiply this equation by $\p_t v+ \la v$, we obtain
\[
\begin{split}
& \ha \p_t\bigl[ (\p_t v)^2+ |\nabla_{x^{\prime}} v|^2 + 2 \la v \p_t v+ 3\la^2 v^2\bigr]+  \la \bigr[ (\p_t v)^2 + |\nabla_{x^{\prime}} v|^2+ \la^2 v^2\bigr]- \\
& \div_{x^{\prime}}( (\p_t v+\la v) \nabla_{x^{\prime}} v)= e^{-\la t} F(t,x^{\prime}) (\p_t v+\la v).
\end{split}
\]
We then integrate it $\mrn_{x^{\prime}}$, denote
\[
\begin{split}
& Z(v,t)=\ha \int_{\mrn} \bigl[ (\p_t v)^2+ |\nabla_{x^{\prime}} v|^2 + 2 \la v \p_t v+ 3\la^2 v^2\bigr]\ dx^{\prime}, \\
& W(v, t)= \int_{\mrn} \bigr[ (\p_t v)^2 + |\nabla_{x^{\prime}} v|^2+ \la^2 v^2\bigr] dx^{\prime}
\end{split}
\] 
and use the fact that $v$ is compactly supported, and we obtain
\[
\p_t Z(v,t) + \la W(v,t) =   2 e^{-\la t}\int_{\mrn} F(t,x^{\prime})(\p_t v+\la v) \ dx,
\]
and use  the Cauchy-Schwarz inequality we find that
\[
\p_t Z(v,t) + \la W(v,t) \leq   2 e^{-\la t} \|F(t,x^{\prime})\|_{{}_{L^2}}\sqrt{W(v,t)},
\]
and therefore 
\[
\p_t Z(v, t) + \la W(v, t) \leq   \frac{2}{\la} e^{-2\la t} \int_{\mrn} |F(t,x^{\prime})|^2 \ dx^{\prime} + \frac{\la}{2} W(v, t).
\]
We integrate this in $(0,t)$ and find
\[
Z(v, t) + \frac{\la}{2} \int_0^t W(v, s) \ ds \leq  Z(v,0)+ \frac{2}{\la} \int_0^t e^{-2\la s} \|F(s,\bullet)\|_{{}_{L^2(\mrn)}}^2 \ ds.
\]
Notice that $v =e^{-\la t} u$ and hence 
\[
\begin{split}
& (\p_t v)^2 + 2\la v \p_t v+ 3\la^2 v^2= e^{-2t} \bigl( (\p_t u)^2+ 2\la^2 u^2\bigr), \\
& (\p_t v)^2 + \la^2 v^2= e^{-2t} \bigl( (\p_t u)^2 -2\la u \p_t u+  2\la^2 u^2\bigr) \geq
e^{-2\la t} \bigl( \frac23(\p_t u)^2+ \frac14 \la^2 u^2\bigr).
\end{split}
\]
In the last inequality we used that  $-2\la u \p_t u\geq -\frac43 \la^2 -\frac34 (\p_t u)^2.$

This implies that there exists a constant $C>0$, independent of $u$ such that
\[
e^{-2\la t} W(u,t) + \la \int_0^t e^{-2\la s} W(u,s) \ ds \leq C\biggl( W(u,0) + \frac{1}{\la} \int_0^t e^{-2\la s} \|F(s,\bullet)\|_{{}_{L^2(\mrn)}}^2 \ ds\biggr).
\]

and this implies \eqref{enerest1} for $m=0$ and hence for all $m$.
\end{proof}

To prove \eqref{enerest0} we set $\tau=\mu t,$ $y=\mu x^{\prime}$, $w(\tau,y)= u(\frac{\tau}{\mu}, \frac{x^{\prime}}{\mu})$. If $\square u=F,$ we  obtain

\[
\bigl( \p_\tau^2-\Delta_y) w= \mu^{-2} G, \;\ G(\tau,y)= F(\frac{\tau}{\mu}, \frac{y}{\mu}).
\]

\eqref{enerest1} with $\la$ replaced by $\frac{\la}{\mu}$ applied to this equation gives
\beq\label{new-eest}
\begin{split}
&e^{-2\frac{\la}{\mu} \tau} E_{w,m,\frac{\la}{\mu}}(\tau) +\sqrt{ \frac{\la}{\mu}} \biggl[ \int_0^{\tau} e^{-2\frac{\la}{\mu} s} E_{w,m,\frac{\la}{\mu}}^2(s) \ ds\biggr]^\ha \leq \\
&C \biggl( E_{w,m,\frac{\la}{\mu}}(0) +\mu^{-2} \sqrt{ \frac{\mu}{\la}}  \biggl[ \int_0^{\tau} e^{-2\frac{\la}{\mu} s} \| G(s, \bullet)\|_{{}_{H^m(\mrn)}}^2 \ ds\biggr]^\ha\biggr)
\end{split}
\eeq

But notice that
\[
\begin{split}
& \| w(\tau,\bullet)\|_{m}= \sum_{|\alpha|\leq m} \| D_y^\alpha w\|_{{}_{L^2(\mrn)}} = \sum_{|\alpha|\leq m} \| D_y^\alpha \bigl( u(\frac{\tau}{\mu}, \frac{y}{\mu})\bigr)\|_{{}_{L^2(\mrn)}} 
=\\
&  \sum_{|\alpha|\leq m} \mu^{-|\alpha|} \| (D_y^\alpha u)(\frac{\tau}{\mu}, \frac{y}{\mu})\|_{{}_{L^2(\mrn)}} =
 \sum_{|\alpha|\leq m} \mu^{-|\alpha|+\novt } \| (D_{x^{\prime}}^\alpha u)(\frac{\tau}{\mu}, x^{\prime})\|_{{}_{L^2(\mrn)}}, 
  \end{split}
 \]
  and similarly
 \[
\begin{split}
&  \| D_\tau w(\tau,\bullet)\|_{m}= \mu^{-1+\novt} \sum_{|\alpha|\leq m} \mu^{-|\alpha|}\| (D_t D_{x^{\prime}}^\alpha u)(\frac{\tau}{\mu}, x^{\prime})\|_{{}_{L^2(\mrn)}}, 
 \end{split}
 \]
 and we also have
 \[
\begin{split}
   \| G(s, \bullet)\|_{{}_{H^m(\mrn)}}=\mu^{\novt}  \sum_{|\alpha|\leq m} \mu^{-|\alpha|} \| D_{x^{\prime}}^\alpha F(\frac{s}{\mu}, x^{\prime})\|_{{}_{L^2}}.
 \end{split}
 \]
 So we deduce that
 \[
 E_{w,m,\frac{\la}{\mu}}(\tau)=\mu^{\novt}\biggl( \mu^{-1} \| (\p_t u)(\frac{\tau}{\mu}, \bullet)\|_{m,\mu}+ \| u(\frac{\tau}{\mu}, \bullet)\|_{m+1,\mu} +\frac{\la}{\mu} \|u(\frac{\tau}{\mu}, \bullet)\|_{m,\mu}\biggr).
 \]
 Now substitute $\tau=\mu T$ and $s=\frac{\sigma}{\mu}$ in \eqref{new-eest}, we obtain \eqref{enerest0}.
 \end{proof}
 We define the spaces
 \[
 \mch_{1,m+1}(\Omega)= \{ u\in L^2([0,T]; H^{m+1}(\mrn)), \;\ \p_t u\in L^2([0,T]; H^{m}(\mrn))\}.
 \]
equipped with the norm
 \[
 \mcn_{m,\mu,\la}(u)= \biggl(\int_0^T e^{-2\la t} \|u(t,\bullet)\|_{m,\mu}^2 \ dt\biggr)^\ha +
 \biggl(\int_0^T e^{-2\la s} \|\p_tu(t,\bullet)\|_{m,\mu}^2 \ ds\biggr)^\ha,
 \]
 
 We will also need the following result, which is Lemma 2.8 of \cite{Gue}:
 \begin{lemma}\label{inc1} Let $m$ be an integer  and $m>\frac{n+1}{2}$. 
 There exists  $C(T,\la)>0$ such that if  $u \in \mch_{1,m+1}(\Omega)$, then
  \beq\label{inc2}
 \|u\|_{{}_{L^\infty}} +  \|D_x u\|_{{}_{L^\infty}} \leq \frac{C(T,\la)}{\mu^\oq} \mcn_{m,\mu,\la}(u).
 \eeq
\end{lemma}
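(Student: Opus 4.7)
The argument is a semiclassical anisotropic Sobolev embedding. The plan is to rescale $y=\mu x'$, $v(t,y)=u(t,y/\mu)$, so that $\|u(t,\cdot)\|_{L^\infty_{x'}}=\|v(t,\cdot)\|_{L^\infty_y}$ and $\|D_{x'}u\|_{L^\infty}=\mu\|D_yv\|_{L^\infty}$, while the weighted Sobolev norms transform as $\|v(t,\cdot)\|_{H^k(\mrn_y)}^2\sim\mu^{n-2k}\|u(t,\cdot)\|_{k,\mu}^2$. Under this substitution the regularity $u\in\mch_{1,m+1}(\Omega)$ becomes $v\in L^2_tH^{m+1}_y\cap H^1_tH^m_y$, and the sought estimate is reduced to a classical anisotropic embedding for $v$ with explicit $\mu$-powers traceable through the change of variables.

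I would then invoke the Lions--Magenes trace theorem for the Gelfand triple $H^{m+1}_y\subset H^{m+1/2}_y\subset H^m_y$: the conditions $v\in L^2_tH^{m+1}_y$ and $\p_tv\in L^2_tH^m_y$ yield $v\in C([0,T];H^{m+1/2}_y)$ with the quantitative bound
\[
\|v\|_{C_tH^{m+1/2}_y}\leq C(T)\bigl(\|v\|_{L^2_tH^{m+1}_y}+\|\p_tv\|_{L^2_tH^m_y}\bigr).
\]
The assumption $m>(n+1)/2$ gives $m+\tfrac12>\tfrac{n}{2}+1$, so the classical Sobolev embedding $H^{m+1/2}(\mrn)\hookrightarrow C^1(\mrn)$ delivers $v,D_yv\in L^\infty_{t,y}$, controlled by the $C_tH^{m+1/2}_y$ norm. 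Translating back and tracking the $\mu$-powers produces overall factors $\mu^{n/2-m-1/2}$ for $\|u\|_{L^\infty}$ and $\mu^{n/2-m+1/2}$ for $\|D_{x'}u\|_{L^\infty}$; under $m>(n+1)/2$ and $\mu\geq 1$ both are bounded by $\mu^{-1/2}$, and in particular by $\mu^{-1/4}$. Finally the exponential weight $e^{-2\la t}\in[e^{-2\la T},1]$ on $[0,T]$ is absorbed into the constant $C(T,\la)$, converting the $L^2_t$ norms on the right-hand side into bounds by $\mcn_{m,\mu,\la}(u)$.

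The main obstacle will be the careful accounting of $\mu$-weights through the rescaling and the precise interpolation in time; the threshold $m>(n+1)/2$ is exactly what is needed to close $H^{m+1/2}\hookrightarrow C^1$ on $\mrn$, and the extra half-derivative slack beyond $m=(n+1)/2$ is the source of the $\mu^{-1/4}$ gain. The $\p_tu$ component of $D_xu$ is handled either by applying the same anisotropic trace argument directly to $\p_tu$, using the second-order-in-time control that is implicit in the formulation of $\mcn_{m,\mu,\la}$ compatible with the energy estimate \eqref{enerest0}, or by interpreting $D_x$ as the spatial gradient in the statement.
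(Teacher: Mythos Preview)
The paper does not actually prove this lemma; it simply states ``We refer to \cite{Gue} for a proof'' (Gu\`es, Lemma~2.8), where the analogous statement is proved for first-order symmetric hyperbolic systems. Your rescaling-plus-trace-theorem approach is precisely the natural reconstruction of that argument and is essentially correct.

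One small inconsistency worth fixing: you state the additive Lions--Magenes bound
\[
\|v\|_{C_tH^{m+1/2}_y}\le C(T)\bigl(\|v\|_{L^2_tH^{m+1}_y}+\|\p_t v\|_{L^2_tH^m_y}\bigr),
\]
but then claim the exponents $\mu^{n/2-m\pm 1/2}$. With the additive form the two terms on the right carry powers $\mu^{n/2-m-1}$ and $\mu^{n/2-m}$, and after multiplying by $\mu$ for $D_{x'}u$ the worse term gives only $\mu^{n/2-m+1}$, which does not yield $\mu^{-1/4}$ in the borderline case $n$ even, $m=n/2+1$. To obtain the exponents you wrote you must use the multiplicative form of the trace inequality,
\[
\|v(t)\|_{H^{m+1/2}}^2 \leqs T^{-1}\|v\|_{L^2_tH^{m+1}}^2 + \|v\|_{L^2_tH^{m+1}}\|\p_t v\|_{L^2_tH^m},
\]
whose geometric-mean scaling indeed gives $\mu^{n/2-m-1/2}$, hence $\mu^{-1/2}\le\mu^{-1/4}$ for integer $m>(n+1)/2$.

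Your concern about the $\p_t u$ component of $D_x u$ is legitimate: the norm $\mcn_{m,\mu,\la}(u)$ alone does not control $\p_t^2 u$, so the trace argument cannot be applied directly to $\p_t u$. In Gu\`es' setting the equation is first rewritten as a first-order system in the unknown $U=(u,\p_t u,\nabla_{x'}u)$, and only the scalar embedding $\|U\|_{L^\infty}\leqs \mu^{-1/4}\mcn(U)$ is required; equivalently, in the present application the wave equation furnishes $\p_t^2 w_{\mu,j}\in L^2_tH^{m-1}$, which closes the same trace argument for $\p_t u$ with one fewer spatial derivative. Either reading resolves the point you flagged.
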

We refer to \cite{Gue} for a proof.
 \subsection{The Proof of Theorem \ref{approx}}   Here we think of $h=\frac{\la}{\mu}$, with $\la$ fixed and we assume that there exists a one parameter family $v_\mu \in \mch_{1,m+1}(\Omega)$, $\mu>1$, such that

\beq\label{hypE}
\square v_\mu= f(x, v_\mu, Dv_\mu)  + \mu^{-M} \mce_\mu(x),
 \eeq
 such that
 \beq\label{hypE1}
\begin{split}
&  \mcn_{m+1,\mu,\la}(\mce_\mu)  \leq C \mu^{m}, \\
  \| v_\mu\|_{{}_{L^\infty(\Omega)}}+ & \| Dv_\mu\|_{{}_{L^\infty(\Omega)}} \leq R  \text{ and }  \mcn_{m,\mu,\la}(v_\mu) \leq C \mu^m, \\
\end{split}
 \eeq

Let $w_{\mu,j}$, $j=0,1,\ldots,$  be the sequence defined as follows
\beq\label{seq0}
\begin{split}
& \square w_{\mu,0}= - \mu^{-M} \mce_\mu(x), \\
& w_{\mu,0}(0,x^{\prime})=0, \;\ \p_t w_{\mu,0}(0,x^{\prime})=0, \\
\end{split}
\eeq

\beq\label{seq1}
\begin{split}
 \square w_{\mu,j}= & f\bigl(x, w_{\mu, j-1}+ v_\mu, D(w_{j-1}+v_\mu)\bigr)- f\bigl(x, v_\mu,D v_\nu\bigr) - \mu^{-M} \mce_\mu(x), \\
& w_{\mu,j}(0,x^{\prime})=0, \;\ \p_t w_{\mu,j}(0,x^{\prime})=0, \\
\end{split}
\eeq

We will show that there exists $\la_1>0$ such that for fixed $\la>\la_1$ one can find $\mu_1>0$ such that for all $\mu>\mu_1,$ there exists $w_\mu \in \mch_{1,m+1}$ such that  
\[
w_{\mu,j} \rightarrow w_\mu \in \mch_{1,m+1} \text{ and }   \mcn_{m,\mu,\la}(w_\mu)\leq  \mu^{M-m}.
\]
 Therefore, since $m> \novt+1,$ by continuity

\[
\square (w_\mu+ v_\mu)= f\bigl(x, w_\mu+v_\mu, D(w_\mu+v_\mu)\big),
\]
and therefore $u_\mu= w_\mu+v_\mu$ satisfies 
\[
\begin{split}
&\square u_\mu= f(x, u_\mu, D u_\mu), \\
u_\mu(0,x^{\prime})= & v_\mu(0,x^{\prime}),   \;\  \p_t u_\mu(0,x^{\prime})= \p_t v_\mu(0,x^{\prime}),
\end{split}
\]
and $u_\mu-v_\mu= w_\mu= \mu^{M-m} z_\mu,$ with $\mcn_{m,\mu,\la}(z_\mu) \leq C$.  This translates exactly into Theorem \ref{approx} by setting $\mu=\frac{\la}{h}$, with $\la>\la_0$ fixed, and $h<h_0= \frac{\la}{\mu_1}.$

 The proof relies on the following
 \begin{lemma}\label{contr}  Let $\Omega=[0,T]\times \mrn$, let $m$ be a positive integer with $m>\novt+1$ and let $M>m$. 
 Suppose there exists a family $v_{\mu}\in \mch_{1,m+1}(\Omega)$, satisfying \eqref{hypE} and \eqref{hypE1} and let $w_{\mu,j}$ be defined by \eqref{seq0} and \eqref{seq1},
  then there exist $\la_1>0$ such that  for fixed $\la>\la_1$,  there exists  $\mu_1>0$ and $C>0$ such that  for all $j=0,1,\ldots,$ 
\beq\label{cont}
\begin{split}
& \mcn_{m,\mu,\la}(w_{\mu,j}) \leq C \mu^{-M+m}, \text{ and } \\
N_{m,\mu,\la}\bigl( & w_{\mu,j+1}- w_{\mu,j}\bigr)  <  \ha N_{m,\mu,\la}\bigl( w_{\mu,j}- w_{\mu,j-1}\bigr).
\end{split}
\eeq
 \end{lemma}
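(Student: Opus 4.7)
The plan is to establish both assertions in \eqref{cont} simultaneously by induction on $j$, using three ingredients: the energy estimate \eqref{enerest0} applied with vanishing Cauchy data (since $w_{\mu,j}\bigr|_{t=0}=\p_t w_{\mu,j}\bigr|_{t=0}=0$ by construction), the Sobolev-type embedding \eqref{inc2} of Lemma \ref{inc1}, and the tame nonlinear estimate \eqref{Gag5} of Proposition \ref{ChainR3}. The relevant consequence of \eqref{enerest0} with zero initial data is, up to harmless constants,
\[
\mcn_{m,\mu,\la}(w_{\mu,j}) \leq \frac{C}{\la\mu}\,\|\square w_{\mu,j}\|_{m,\mu,\la},
\]
so the whole argument reduces to controlling $\|\square w_{\mu,j}\|_{m,\mu,\la}$ from \eqref{seq0}--\eqref{seq1} using the inductive hypothesis.

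For the base case $j=0$, the source is $-\mu^{-M}\mce_\mu$ and the bound $\mcn_{m+1,\mu,\la}(\mce_\mu)\leq C\mu^m$ from \eqref{hypE1} immediately yields $\mcn_{m,\mu,\la}(w_{\mu,0})\leq C\la^{-1}\mu^{-M+m-1}$, which is well within the claimed bound. For the inductive step of the first inequality in \eqref{cont}, assume $\mcn_{m,\mu,\la}(w_{\mu,j-1})\leq C\mu^{-M+m}$. Then \eqref{inc2} gives $\|w_{\mu,j-1}\|_{L^\infty}+\|Dw_{\mu,j-1}\|_{L^\infty}\leq C(T,\la)\mu^{-\oq-M+m}$, which tends to $0$ as $\mu\to\infty$, so for $\mu$ large the arguments of $f$ stay in a fixed bounded set and the constants in Proposition \ref{ChainR3} can be taken uniformly in $j$. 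Applying \eqref{Gag5} with $F=f$, $u=w_{\mu,j-1}$, $w=0$ bounds the $\|\cdot\|_{m,\mu}$-norm of the nonlinear source in \eqref{seq1} by a constant times $\|w_{\mu,j-1}\|_{m+1,\mu}+\|\p_t w_{\mu,j-1}\|_{m,\mu}$; after weighting by $e^{-2\la t}$ and plugging back into the energy estimate, the factor $\tfrac{1}{\la\mu}$ on the right is precisely what absorbs the single-derivative mismatch, closing the bound $\mcn_{m,\mu,\la}(w_{\mu,j})\leq C\mu^{-M+m}$ once $\la$ is taken large enough.

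For the contraction estimate, the difference $\delta_j := w_{\mu,j+1}-w_{\mu,j}$ satisfies a wave equation with zero Cauchy data and source
\[
f\bigl(x, w_{\mu,j}+v_\mu, D(w_{\mu,j}+v_\mu)\bigr)-f\bigl(x, w_{\mu,j-1}+v_\mu, D(w_{\mu,j-1}+v_\mu)\bigr).
\]
Applying \eqref{Gag5} with $u=w_{\mu,j}$ and $w=w_{\mu,j-1}$ produces two pieces. The first carries the factor $\|u-w\|_{L^\infty}+\|D(u-w)\|_{L^\infty}$, which by \eqref{inc2} is bounded by $C(T,\la)\mu^{-\oq}\mcn_{m,\mu,\la}(\delta_{j-1})$, already small. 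The second piece, $\|u-w\|_{m+1,\mu}+\|\p_{x_0}(u-w)\|_{m,\mu}$, is an $\mcn_{m+1,\mu,\la}$-type norm of $\delta_{j-1}$; integrating in time and applying the energy estimate to $\delta_j$, the combined $\la^{-1}\mu^{-1}$ gain again absorbs the one-derivative excess. Choosing first $\la$ and then $\mu$ large then yields the desired $\ha$-contraction.

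The main obstacle is bookkeeping the loss of derivatives: the tame estimate \eqref{Gag5} inevitably produces norms of order $m+1$ in $x^\prime$ and order $m$ in $\p_t$ on the right, one degree higher than the $\mcn_{m,\mu,\la}$-norm we control on the left, so the smallness needed both to close the induction and to obtain contraction must come entirely from the $\la^{-1}\mu^{-1}$ factor in \eqref{enerest0} together with the $\mu^{-\oq}$ gain from \eqref{inc2}. The assumption $m>\novt+1$ is exactly what makes that Sobolev embedding available, while the a priori $L^\infty$ bound on $v_\mu$ and $Dv_\mu$ from \eqref{hypE1}, together with the inductive norm bound on $w_{\mu,j-1}$, keeps the constants from Proposition \ref{ChainR3} uniform in $j$.
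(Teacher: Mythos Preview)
Your overall strategy---induction on $j$ using the energy estimate \eqref{enerest0}, the Sobolev embedding \eqref{inc2}, and the tame estimate \eqref{Gag5}---is exactly the paper's, and the way you use \eqref{inc2} to keep the $L^\infty$-norms (and hence the constants in Proposition~\ref{ChainR3}) uniform in $j$ is correct.

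There is, however, a real gap in your bookkeeping of the norms. You write that the iteration closes in $\mcn_{m,\mu,\la}$, that \eqref{Gag5} inevitably produces $\|w_{\mu,j-1}\|_{m+1,\mu}+\|\p_t w_{\mu,j-1}\|_{m,\mu}$ on the right, ``one degree higher than the $\mcn_{m,\mu,\la}$-norm we control on the left,'' and that the factor $\la^{-1}\mu^{-1}$ from the energy estimate ``absorbs the single-derivative mismatch.'' This last claim is false: no power of $\mu^{-1}$ can convert $\|w\|_{m+1,\mu}$ into something controlled by $\|w\|_{m,\mu}$, because $\|w\|_{m+1,\mu}$ contains the top-order piece $\sum_{|\alpha|=m+1}\|D^\alpha w\|_{L^2}$ with coefficient $\mu^0=1$, which simply does not appear in $\|w\|_{m,\mu}$. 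If you iterate in $\mcn_{m,\mu,\la}$ as you describe, the scheme loses a derivative at each step and does not close.

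The fix is that \eqref{enerest0} actually gives you more than $\mcn_{m,\mu,\la}$: with zero Cauchy data it controls the \emph{stronger} quantity $\|w\|_{m+1,\mu,\la}+\mu^{-1}\|\p_t w\|_{m,\mu,\la}$ by $C\mu^{-1}\la^{-1}\|\square w\|_{m,\mu,\la}$. This is precisely the norm appearing on the right of \eqref{Gag5} (up to the harmless $\mu^{-1}$ weight on the time derivative), so the iteration should be performed in $\|w\|_{m+1,\mu,\la}+\|\p_t w\|_{m,\mu,\la}$, where there is no derivative loss at all. The only genuine mismatch is the $\mu^{-1}$ in front of $\|\p_t w\|_{m,\mu,\la}$ on the energy side versus no $\mu^{-1}$ on the tame side; this is exactly absorbed by the $\mu^{-1}$ gain in \eqref{enerest0}, leaving a net factor $\la^{-1}$ (the paper writes $\la^{-3/2}$) for the main contraction term and an additional $\mu^{-1/4}$ from \eqref{inc2} for the $L^\infty$ term. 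One then chooses first $\la_1$ large (so that the main term contracts), and then, for each fixed $\la>\la_1$, chooses $\mu_1$ large (to handle the $\mu^{-1/4}$ piece and to guarantee $\|v_\mu+w_{\mu,j}\|_{W^{1,\infty}}\leq 2R$). This is exactly how the paper proceeds; once you iterate in the correct norm, your sketch becomes a valid proof.
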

\begin{proof} We know from \eqref{enerest0} that
\[
\|w_{\mu,0}\|_{m+1,\mu,\la} +\frac{1}{\mu} \|D_tw_{\mu,0}\|_{m,\mu,\la} \leq \frac{C\mu^{-M}}{\mu \la^{\tha}}  \|\mce_\mu(x)\|_{m,\mu,\la} \leq \frac{C\mu^{-M+m}}{\mu \la^{\tha}}.
\]
This implies that $w_{\mu,0}$ satisfies the first part of \eqref{cont}.  Suppose that $w_{\mu,k}$ satisfies the first part of  \eqref{cont} for all $k<j$, then it follows from \eqref{enerest0} that
\[
\begin{split}
& \|w_{\mu,j}\|_{m+1,\mu,\la} +\frac{1}{\mu} \|D_tw_{\mu,j}\|_{m,\mu,\la} \leq \\
\frac{C}{\la^{\tha} \mu} \bigl\| f\bigl(x, w_{\mu, j-1}& + v_\mu, D(w_{j-1}+v_\mu)\bigr)- f\bigl(x, v_\mu,D v_\mu\bigr)\bigr\|_{m,\mu,\la} + \frac{C \mu^{-M}}{\la^{\tha} \mu} \bigl\|\mce_\mu(x)\bigr\|_{m,\mu,\la}.
\end{split}
\]
We know from \eqref{inc2} that for $\mu$ large $\| v_\mu+ w_{\mu,k}\|_{{}_{L^\infty}}+ \| D(v_\mu+ w_{\mu,k})\|_{{}_{L^\infty}} \leq 2R$ for all $k<j$ and it follows from 
 Proposition \ref{ChainR3}  that
\[
\begin{split}
& \bigl\| f\bigl(x,w_{\mu,j-1}+v_\mu,D(w_{\mu,j-1}+v_\mu)\bigr)- f\bigl(x,w_{\mu,j-2}+v_\mu,D(w_{\mu,j-2}+ v_\mu)\bigr)\bigr\|_{m,\mu,\la} \leq \\
& H_{m,\mu,\la}\bigl(R, w_{\mu,j-1}, w_{\mu,j-2}\bigr)  \bigl( 1+ \| w_{\mu,j-1}-w_{\mu,j-2}\|_{{}_{L^\infty}}+  \| D_x (w_{\mu,j-1}-w_{\mu,j-2})\|_{{}_{L^\infty}}\bigr) \\
&\text{ where } H_{m,\mu,\la}(R, w_{\mu,j-1}, w_{\mu,j-2}) = C_1(R) \bigl(\|w_{\mu,j-1}-w_{\mu,j-2}\|_{m+1,\mu,\la} + \|D_t( w_{\mu,j-1}-w_{\mu,j-2})\|_{m,\mu,\la}\bigl).
\end{split}
\]
 But it follows from  \eqref{inc2} that
\beq\label{aux1}
\begin{split}
& \| w_{\mu,j-1}-w_{\mu,j-2}\|_{{}_{L^\infty}}+ \|D_x (w_{\mu,j-1}-w_{\mu,j-2})\|_{{}_{L^\infty}} \leq  \\
&\frac{C_2(\la)}{\mu^\oq} (\| w_{\mu,j-1}-w_{\mu,j-2}\|_{m+1,\mu,\la}+ \| D_t (w_{\mu,j-1}-w_{\mu,j-2})\|_{m,\mu,\la}),
\end{split}
\eeq
This shows that $w_{\mu,j}$ satisfies the first part of \eqref{cont}.  Now we need to verify the second part of \eqref{cont}. Since
\[
\begin{split}
 \square( w_{\mu,j}- w_{\mu,j-1})=&  f\bigl(x, w_{\mu,j-1}+ v_\mu, D(w_{\mu,j-1}+ v_\mu\bigr)- f\bigl(x, w_{\mu,j-2}+ v_\mu, D(w_{\mu,j-2}+ v_\mu\bigr), \\
& w_{\mu,j}(0,x^{\prime})=0 \text{ and }  \p_t w_{\mu,j}(0,x^{\prime})=0,
\end{split}
\]
it follows from \eqref{enerest0} that
\[
\begin{split}
& \|w_{\mu,j}-w_{\mu,j-1}\|_{m+1,\mu,\la} +\frac{1}{\mu} \|D_t(w_{\mu,j}-w_{\mu,j-1})\|_{m,\mu,\la} \leq \\
\frac{C}{\la^{\tha} \mu} \bigl\| f\bigl(x, w_{\mu, j-1}& + v_\mu, D(w_{j-1}+v_\mu)\bigr)- f\bigl(x, w_{j-2}+ v_\mu,D (w_{j-2}+v_\mu) \bigr)\bigr\|_{m,\mu,\la} .
\end{split}
\]
It follows from the first part of \eqref{cont} that there exists $\mu_0(R,\la,T)$ such that 
\beq\label{chla}
\| v_\mu+ w_{\mu,j}\|_{{}_{L^\infty}}+ \| D(v_\mu+ w_{\mu,j})\|_{{}_{L^\infty}} \leq 2R
\text{ for all } j, \text{ provided } \mu> \mu_0= \mu_0(R,\la,T).
\eeq
So we conclude from Proposition \eqref{ChainR3} that for $\mu>\mu_0,$
\[
\begin{split}
& \|w_{\mu,j}-w_{\mu,j-1}\|_{m+1,\mu,\la} +\frac{1}{\mu} \|D_t(w_{\mu,j}-w_{\mu,j-1})\|_{m,\mu,\la} \leq \\
&  \frac{C C_1(R)}{ \mu \la^\tha} \biggl(  \|w_{\mu,j-1}-w_{\mu,j-2}\|_{m+1,\mu,\la}+ \|D_t (w_{\mu,j-1}-w_{\mu,j-2})\|_{m,\mu,\la} \biggr) + \\
&\frac{C RC_1(R) C_2(\la)}{ \mu^{\frac54} \la^\tha} \biggl(  \|w_{\mu,j-1}-w_{\mu,j-2}\|_{m+1,\mu,\la}+ \|D_t (w_{\mu,j-1}-w_{\mu,j-2}) \|_{m,\mu,\la}\biggr)
\end{split}
\]
Now pick $\la_1$ such that $C C_1(R)\la^{-\tha} < \oq$, provided $\la > \la_1.$ Fixed $\la$ with   $\la>\la_1,$ pick $\mu_1$ such that $C R C_1(R) \la^{-\tha} C_2(\la)\mu^{-\del} < \oq$ for $\mu>\mu_1>\mu_0.$ This gives the second part of \eqref{cont}.

\end{proof}


\begin{thebibliography}{99}




\bibitem{BreMir}  H. Brezis and P. Mironescu. {\em Gagliardo-Nirenberg, composition and products in fractional
Sobolev spaces.}  Journal of Evolution Equations, Springer Verlag, 2001, 1 (4), pp.387 -- 404. 

\bibitem{Blo} N. Bloembergen. {\em Nonlinear Optics.} W.A. Benjamin Inc., New York, 1965


 \bibitem{BorWol} M.Born and E.Wolf. {\em Principles of Optics.} Pergamon Press, 1959.
 

\bibitem{Boy} R. Boyd. {\em  Nonlinear Optics.}  Academic Press, 1992

\bibitem{Cho}  Y. Choquet-Bruhat. {\em  Ondes asymptotiques et approch\'ees pour les syst\`emes non lin\'eaires d'\'equations aux
d\'eriv\'ees partielles.}  J. Math. Pures Appl. 48 (1969) 117--158.

\bibitem{Cho1}  Y. Choquet-Bruhat. {\em The null condition and asymptotic expansions for the Einstein equations.} Journ\'ees Relativistes 99 (Weimar) Ann. Phys. 9 (2000), no. 3-5, 258--266.


\bibitem{Chr}  D. Christodoulou. {\em Solutions globales des \'equations de Champ de Yang Mills. } C . R . Acad . Sci . Paris , 293 , S\'eries A,
p p . 39--42 (1981).

\bibitem{ConSav} G.M Constantine and T.H. Savits. {\em A Multivariate Faa Di Bruno formula with applications.} Transactions of the AMS. Volume 348, Number 2, 503-520 (1996)









\bibitem{Del}  J-M. Delort. {\em  Oscillations semi-lin\'eaires multiphas\'ees
compatibles en dimension 2 ou 3 d'espace.}  Communications in Partial Diﬀerential Equations, 16:4-5, 845-872, (1991) DOI: 10.1080/03605309108820781



\bibitem{EptSte1} N. Eptaminitakis and P. Stefanov. {\em  Weakly Nonlinear Geometric Optics for the Westervelt Equation and Recovery of the Nonlinearity.} 
SIAM journal on mathematical analysis, 2024-01, Vol.56 (1), p.801-819

\bibitem{EptSte2} N. Eptaminitakis and P. Stefanov. {\em The DC Kerr Effect in Nonlinear Optics.} arXiv:2505.01392 




\bibitem{FeiLas} A. Feizmohammadi, M. Lassas, and L. Oksanen. {\em  Inverse problems for nonlinear hyperbolic equations
with disjoint sources and receivers.}  In Forum of Mathematics, Pi, volume 9, page e10. Cambridge University Press, 2021.

\bibitem{Fio}  A. Fiorenza, M. R. Formica, T. Roskovec and F. Soudsk\'y. {\em  Detailed proof of classical Gagliardo-
Nirenberg interpolation inequality with historical remarks.}  Zeitschrift f\"ur Analysis und ihre Anwendungen Journal of Analysis and its Applications
Volume 40 (2021), 217 -- 236,  DOI: 10.4171/ZAA/1681










\bibitem{Gue} O. Gu\`es. {\em D\'eveloppement asymptotique de solutions exactes de syst\`emes hyperboliques quasilin\'eaires.} Asymptotic Analysis 6 (1993) 241-269

\bibitem{HinUhl} P. Hintz and G. Uhlmann. {\em Reconstruction of Lorentzian manifolds from boundary light observation
sets. } International Mathematics Research Notices, 2019(22):6949--6987, 2017.



\bibitem{HunLuk}   C. Huneau and J. Luk. {\em High-frequency solutions to the Einstein equations.}  Classical Quantum Gravity 41 (2024), no. 14, Paper No. 143002, 48 pp.


\bibitem{JolMetRau} J-L. Joly, G.  M\'etivier, and J.  Rauch. {\em Recent results in non-linear geometric optics.}  In Hyperbolic problems: theory,
numerics, applications, Vol. II (Z\"urich, 1998), volume 130 of Internat. Ser. Numer. Math., pages 723--736. Birkh¨\"auser, Basel, 1999.



  

 

  
\bibitem{Kla} S. Klainerman. {\em  The null condition and global existence to nonlinear wave equations.}  Lectures in Applied Mathematics
Volume 23, 293-326, (1986).

\bibitem{KlaMac} S. Klainerman and M. Machedon. {\em Space-Time Estimates for Null Forms and the Local Existence Theorem.} Communications on Pure and Applied Mathematics, Vol. XLVI, 1221--1268 (1993).


  
\bibitem{KurLasUhl} Y.Kurylev, M. Lassas and G. Uhlmann. {\em Inverse problems for Lorentzian manifolds and non-linear hyperbolic equations.}  Invent. Math. 212 (2018), no. 3, 781–857.

\bibitem{KurLasOksUhl}  Y.Kurylev, M. Lassas, L. Oksanen  and G. Uhlmann. {\em  Inverse problems  for Einstein-scalar-field equations.} Duke Math. J. 171 (2022), no. 16, 3215–3282.

\bibitem{KuvSmiCam} B.N Kuvshinov, T.J.H Smit and X.H. Chapman. {\em Non-linear interaction of elastic waves in rocks.} Geophys. J. Int. (2013) 194, 1920-1940.



\bibitem{LasUhlWan} M. Lassas and G. Uhlmann and Y. Wang. {\em Inverse problems for semilinear wave equations on Lorentzian manifolds.}  Comm. Math. Phys. 360 (2018), no. 2, 555 -- 609.

\bibitem{LinRod} H. Lindblad and I. Rodnianski. {\em The weak null condition for Einstein’s equations.}  C. R. Acad. Sci. Paris, Ser. I 336 (2003) 901–906.



\bibitem{Maj1} A. J. Majda and M.  Artola.  {\em Nonlinear geometric optics for hyperbolic mixed problems.}  In Analyse math\'ematique
et applications, pages 319–356. Gauthier-Villars, Montrouge, 1988.
\bibitem{Maj2} A. Majda.  {\em Compressible fluid flow and systems of conservation laws in several space variables.}  Volume 53 of Applied
Mathematical Sciences. Springer-Verlag, New York, 1984.


\bibitem{Met1}
G.~M{\'e}tivier.
\newblock The mathematics of nonlinear optics.
\newblock In {\em Handbook of differential equations: evolutionary equations.
  {V}ol. {V}}, Handb. Differ. Equ., pages 169--313. Elsevier/North-Holland,
  Amsterdam, 2009.
  
  
  
  \bibitem{NewMol} A. C. Newell and J. V. Moloney. {\em Nonlinear optics.} Advanced Topics in the Interdisciplinary Mathematical Sciences. Addison-Wesley Publishing Company Advanced Book Program, Redwood City, CA, 1992.
  

  \bibitem{QiuFar} Y. Qiu, F. Zaki, N. Chandra, S.A. Chester and X. Liu. {\em Nonlinear characterization of elasticity using
quantitative optical coherence elastography.}  Biomedical Optics Express, Vol. 7, No. 11 ( 2016).


  
  \bibitem{Rau}
J.~Rauch.
\newblock {\em Hyperbolic partial differential equations and geometric optics},
  volume 133 of {\em Graduate Studies in Mathematics}.
\newblock American Mathematical Society, Providence, RI, 2012.

\bibitem{Sia} S.  RabieniaHaratbar {\em Support theorem for the Light-Ray transform of vector fields on Minkowski spaces.}  Inverse Problems and Imaging. 2018, Volume 12, Issue 2: 293-314. Doi: 10.3934/ipi.2018013


 \bibitem{SaBSte} A. S\'a Barreto and P. Stefanov. {\em Recovery of a cubic non-linearity in the wave equation in the weakly non-linear regime.} Comm. Math. Phys. 392 (2022), no. 1, 25--53.

 \bibitem {SaBSte1} A. S\'a Barreto and P. Stefanov. {\em Recovery of a general nonlinearity in the semilinear wave equation.} Asymptot. Anal. 138 (2024), no. 1-2, 27–68.
 
 

\bibitem{SaBUhlWan}  A. S\'a Barreto, G. Uhlmann and Y.Wang. {\em Inverse scattering for critical semilinear wave equations.} Pure Appl. Anal. 4 (2022), no. 2, 191–223.

 
\bibitem{Sog}  C. Sogge.  {\em On local existence for nonlinear wave equations satisfying variable coeffcient null conditions.}  Communications in Partial Differential Equations, 18:11, 1795-- 1821,(1993) DOI: 10.1080/03605309308820994


\bibitem{Ste} P. Stefanov, Support theorems for the light ray transform on analytic lorentzian manifolds,
Proc. Amer. Math. Soc., 145 (2017), 1259--1274.

\bibitem{SteUhl} P. Stefanov and G. Uhlmann. {\em Microlocal analysis and integral geometry.}  Book preprint, (2025).

\bibitem{Tab}  L.A. Taber. {\em Nonlinear theory of elasticity : applications in biomechanics.}  World Scientific,  (2004), ISBN: 9786611934392.

\bibitem{Tat} D. Tataru. {\em Local and global results for wave maps. I.}  Comm. Partial Diﬀ. Eq.  
23(9 --10): 1781--1793, 1998.

\bibitem{Tat1} D. Tataru. {\em On global existence and scattering for the wave maps equation.}  Amer.
J. Math., 123(1): 37--77, 2001.

\bibitem{TinWan}  T. Zhou and Y. Wang. {\em Inverse problems for quadratic derivative nonlinear wave equations.} Comm. in Partial Diff. Eqns.  44.11 (2019): 1140-1158.


\bibitem{Tou} A. Touati. {\em Global existence of high-frequency solutions to a semi-linear wave equation with a null structure.}  Asymptotic Analysis 131 (2023) 541 -- 582 DOI 10.3233/ASY-221780

\bibitem{UhlZha} G. Uhlmann and J. Zhai{\em On an inverse boundary value problem for a nonlinear elastic wave equation.} J. Math. Pures Appl. (9) 153 (2021), 114--136.

\bibitem{UhlZha1}  G. Uhlmann and J. Zhai, J. {\em Inverse problems for nonlinear hyperbolic equations. } Discrete Contin. Dyn. Syst.: Ser. A 41(1) (2021)

\bibitem{WanLiu} Z. G. Wang, Y. Liu, G. Wang  and L. Z. Sun. {\em Elastography Method for Reconstruction of
Nonlinear Breast Tissue Properties.}  International Journal of Biomedical Imaging Volume 2009, Article ID 406854, 9 pages doi:10.1155/2009/406854

\bibitem{Zwo}  M. Zworski. {\em Semiclassical analysis.} Grad. Stud. Math., 138.
American Mathematical Society, Providence, RI, 2012, xii+431 pp. ISBN: 978-0-8218-8320-4
 \end{thebibliography}
\end{document}